\newtheorem{theorem}{Theorem}[section]
\newtheorem{lemma}[theorem]{Lemma}
\theoremstyle{definition}
\theoremstyle{remark}
\newtheorem{proposition}[theorem]{Proposition}
\numberwithin{equation}{section}
\DeclareMathAlphabet\mathbfcal{OMS}{cmsy}{b}{n}
\newcommand{\vertiii}[1]{{\left\vert\kern-0.25ex\left\vert\kern-0.25ex\left\vert #1 
		\right\vert\kern-0.25ex\right\vert\kern-0.25ex\right\vert}}
\newcommand{\grad}{\nabla}
\newcommand{\divv}{\nabla \cdot}
\newcommand{\norm}[1]{\| #1\|}
\newcommand{\enorm}[1]{\| #1\|_{\mathcal{E}}}
\newcommand{\snorm}[1]{| #1 |}
\newcommand{\triplenorm}[1]{%
	\left\vert\kern-0.9pt\left\vert\kern-0.9pt\left\vert #1
	\right\vert\kern-0.9pt\right\vert\kern-0.9pt\right\vert} 
\newcommand{\av}[1]{\left\{ #1 \right\}} 
\newcommand{\jump}[1]{\left\lbrack\!\left\lbrack #1 \right\rbrack\!\right\rbrack} %usata
\newcommand{\cV}{\mathcal{V}}
\newcommand{\cW}{\mathcal{W}}
\newcommand{\Done}{\mathcal{DG}_1}
\newcommand{\Ch}{\mathcal{CG}_1}
\newcommand{\cVh}{\mathcal{V}_h}
\newcommand{\cWh}{\cW_h}
\newcommand{\bff}{\mathbf{f}}
\newcommand{\bXh}{\bm{X}_h}
\newcommand{\pOm}{\partial \Omega}
\newcommand{\Gamman}{\Gamma_N}
\newcommand{\Gammad}{\Gamma_D}
\newcommand{\Eh}{{{\mathcal E}_h}} %usata
\newcommand{\Eho}{{{\mathcal E}^{I}_h}} %usata
\newcommand{\Ehb}{\mathcal{E}_h^{\partial}}
\newcommand{\Ehbd}{\mathcal{E}_h^{\partial, D}}
\newcommand{\Ehbn}{\mathcal{E}_h^{\partial, N}}
\newcommand{\Th}{\mathcal{T}_h}
\newcommand{\K}{T}
\newcommand{\sumK}{\sum_{\K \in \Th}} %usata
\newcommand{\sumE}{\sum_{e \in \Eh}} %usata
\newcommand{\sumEo}{\sum_{e \in \Eho}} %usata
\newcommand{\sumEod}{\sum_{e \in \Eho \cup \Ehbd }} %usata
\newcommand{\sumEd}{\sum_{e \in \Ehbd}} %usata
\newcommand{\sumEn}{\sum_{e \in \Ehbn }} %usata
\newcommand{\Ltwo}{L^2(\Omega)}
\newcommand{\Ltwod}{[L^2(\Omega)]^d}%    L^2(\Omega)^d}
\newcommand{\Ltwoz}{L^2_0(\Omega)}
\newcommand{\Hone}{H^{1}(\Omega)}
\newcommand{\Honed}{{[H^{1}(\Omega)]^d}}
\newcommand{\Honedzd}{[H^{1}_{0,D}(\Omega)]^d}
\newcommand{\Htwod}{[H^{2}(\Omega)]^d}
\newcommand{\bzero}{\mathbf{0}}
\newcommand{\etau}{\mathbb{\eta}_\bu}
\newcommand{\etap}{\eta_p}
\newcommand{\xiu}{\mathbb{\xi}_\bu}
\newcommand{\xip}{\xi_p}
\newcommand{\bah}{\ba_\theta}
\newcommand{\bbh}{\bb}
\newcommand{\Pih}{\Pi_h}
\newcommand{\buh}{\bu_h}
\newcommand{\ph}{p_h}
\newcommand{\bne}{\bn_e}
\newcommand{\bnK}{\bn_\K}
\newcommand{\nK}{\bnK}
\newcommand{\nne}{\bne}
\newcommand{\half}{\frac{1}{2}}
\newcommand{\dyle}{\displaystyle}
\journal{Computers \& Mathematics with Applications}
\begin{document}

\begin{frontmatter}

\title{An Enriched Galerkin Method for the Stokes Equations}

%% Group authors per affiliation:
\author[Yi]{Son-Young Yi}
\ead{syi@utep.edu}

\author[Hu]{Xiaozhe Hu}
\ead{xiaozhe.hu@tufts.edu}

\author[Lee]{Sanghyun Lee\corref{mycorrespondingauthor}}
\cortext[mycorrespondingauthor]{Corresponding author}
\ead{lee@math.fsu.edu}

\author[Adler]{James H. Adler}
%\address[Adler]{Department of Mathematics, Tufts University, Medford, MA 02155}
\ead{James.Adler@tufts.edu}

\address[Yi]{Department of Mathematical Sciences, University of Texas at El Paso, El Paso, TX 79968}
\address[Hu]{Department of Mathematics, Tufts University, Medford, MA 02155}
\address[Lee]{Department of Mathematics, Florida State University, Tallahassee, FL 32306}
\address[Adler]{Department of Mathematics, Tufts University, Medford, MA 02155}

%\author{Elsevier\fnref{myfootnote}}
%\address{Radarweg 29, Amsterdam}
%\fntext[myfootnote]{Since 1880.}

%% or include affiliations in footnotes:
%\author[mymainaddress,mysecondaryaddress]{Elsevier Inc}
%\ead[url]{www.elsevier.com}

%\author[mysecondaryaddress]{Global Customer Service\corref{mycorrespondingauthor}}
%\cortext[mycorrespondingauthor]{Corresponding author}
%\ead{support@elsevier.com}

%\address[mymainaddress]{1600 John F Kennedy Boulevard, Philadelphia}
%\address[mysecondaryaddress]{360 Park Avenue South, New York}

\begin{abstract}
We present a new enriched Galerkin (EG) scheme for the Stokes equations based on piecewise linear elements for the velocity unknowns and piecewise constant elements for the pressure. The proposed EG method augments the conforming piecewise linear space for velocity by adding an additional degree of freedom which corresponds to one discontinuous linear basis function per element. Thus, the total number of degrees of freedom is significantly reduced in comparison with standard conforming, non-conforming, and discontinuous Galerkin schemes for the Stokes equation. We show the well-posedness of the new EG approach and prove that the scheme converges optimally. For the solution of the resulting large-scale indefinite linear systems we propose robust block preconditioners, yielding scalable results independent of the discretization and physical parameters.  Numerical results confirm the convergence rates of the discretization and also the robustness of the linear solvers for a variety of test problems. 
\end{abstract}

\begin{keyword}
Enriched Galerkin, Stokes, Finite Element Method, Inf-sup
\end{keyword}

\end{frontmatter}

%\linenumbers

\section{Introduction}\label{sec:intro}
We consider the Stokes equations in a bounded polyhedral domain, $\Omega\subset\mathbb{R}^d$ ($d=2,3$), with Lipschitz boundary $\Gamma=\partial\Omega$. 
Let $\bu: \Omega\rightarrow\mathbb{R}^d$ be the velocity field of a fluid occupying $\Omega$ and $p:\Omega\rightarrow\mathbb{R}$ denote the kinematic pressure. Given  $\bm{f}\in [L^2(\Omega)]^d$, $\bm{g}\in [H^\half(\Gammad)]^d$, and $\bm{s}\in [H^{-\half}(\Gamman)]^d$, 
the Stokes system for the velocity and the pressure of an incompressible viscous fluid within $\Omega$ is
\begin{subequations}\label{eqn: stokes}
	\begin{alignat}{2}
		- \divv (2 \mu \epsilon (\bu)  - p \bI) & = \bff && \mbox{  in } \Omega,  \label{stokes1} \\
		\divv \bu & = 0 && \mbox{  in } \Omega, \label{stokes2}  \\
		\bu & = \bg && \mbox{ on } \Gammad, \label{bdry-d} \\
		(2 \mu \epsilon (\bu)  - p \bI) \bn & = \bs && \mbox{ on } \Gamman. \label{bdry-t}
	\end{alignat}
\end{subequations}
Here, $\bepsilon(\bu) = \half ( \grad \bu + (\grad \bu)^T )$ is the symmetric part of the gradient of $\bu$, $\bI$ is the identity tensor on $\mathbb{R}^d$, and $\mu > 0$ is the fluid viscosity.
% In this model, the first equation \eqref{stokes1} is referred to as the momentum equation, whereas the second equation \eqref{stokes2} is the incompressibility equation. 
We impose a Dirichlet boundary condition on $\Gammad$, {where $|\Gammad| > 0$,} and a Neumann boundary condition on $\Gamman = \pOm \setminus \Gammad$ with $\bn$ as the unit outward normal vector on $\Gamman$. 
{If $\Gamman = \emptyset$, the following condition is enforced:
\begin{equation}\label{eqn: compat}
\int_{\partial \Omega} \bg \cdot \bn \, ds = 0.
\end{equation}
 }

It is well-known that when solving the Stokes equations, finite-element spaces for the velocity and pressure have to satisfy an {\it inf-sup} stability condition (LBB condition)~\cite{1969Ladyzhenskaya-a,1973Babuska-a,1974Brezzi-a} in order to provide a stable and convergent solution. Roughly speaking, this inf-sup condition requires the proper balance between the velocity and pressure spaces; if the velocity space is not sufficiently large compared to the pressure space, the pressure approximation presents spurious oscillations.  For a detailed study and comprehensive review of inf-sup stable finite elements for the Stokes system, we refer to~\cite{1986GiraultRaviart-a,1991BrezziFortin-a,2013BoffiBrezziFortin-a} {and research cited therein}. 

Another difficulty that arises when designing finite-element methods for the Stokes equations stems from the numerical treatment of the incompressibility constraint (mass conservation); its importance is emphasized in several works (see, e.g.~\cite{2009Linke-a}, \cite{2018LinkeMerdonNeilanNeumann-a} and a review article~\cite{2017VolkerLinkeMerdonNeilanRebholz-a}). Indeed, one can construct conforming finite elements that satisfy the incompressibility constraint point-wise. However, this leads us to a velocity space consisting of high-order or complex elements with limited applicability. One example in two-dimensions (2D) is  the Scott-Vogelius element ($\mathbb{P}_{k}$-$\mathbb{P}_{k-1}, \, k \ge 4$) \cite{1985ScottVogelius-a}. In three dimensions (3D), the situation is more subtle, and the error and stability analysis of the Scott-Vogelius element is yet to be completed.  On the other hand, one can resort to conforming finite-element methods that use lower-order elements but satisfy the incompressibility condition only weakly. Some classical examples of such finite elements include Taylor-Hood, Bernardi-Raugel, and MINI elements. An approach based on analogues of the Bernardi-Raugel element, which satisfy the incompressibility constraint pointwise and use rational basis functions, was proposed and studied in~\cite{2014GuzmanNeilan-b,2014GuzmanNeilan-a}.

A primary goal of our study is to develop inf-sup stable finite-element spaces with minimal number of degrees of freedom for the Stokes system  with mixed boundary conditions on simplicial meshes.  { In addition, we aim to equip this solution framework with an optimal linear solver}. For the pressure variable, we choose the piecewise constant space as it is the lowest-order element. Then, a continuous linear space would seem like an attractive choice for the velocity. 
However, the continuous linear velocity and piecewise constant pressure ($\mathbb{P}_1$-$\mathbb{P}_0$) pair  has long been known to be unstable. In order to make the velocity space sufficiently large to pair with the piecewise constant pressure space, Fortin~\cite{1972Fortin} used continuous quadratic elements for the velocity in 2D. Later, Bernardi and Raugel~\cite{1985BernardiRaugel-a} proposed a slightly smaller velocity space by enriching the continuous linear space with quadratic edge bubble functions on each edge of the mesh elements in 2D. The straightforward extension of Bernardi-Raugel element in 3D is a subspace of the continuous cubic space. To reduce the number of degrees of freedom even further, another approach is to utilize nonconforming elements for the velocity. For example,  Crouzeix and Raviart \cite{1973CrouzeixRaviart} showed that the nonconforming linear space with continuity at the midpoints of element edges for the velocity and the piecewise constant space for pressure  provide a stable pair for the Stokes system with Dirichlet boundary conditions ($\partial \Omega = \Gammad$). However, this approach does not work for the Stokes system if $\Gamman \ne \emptyset$. Kouhia and Stenberg~\cite{1995KouhiaStenberg} circumvented this problem by using a velocity space consisting of conforming linear elements for one component  and nonconforming linear elements for the other component in 2D, producing the lowest-order stable finite-element method for the two-dimensional Stokes system with mixed boundary conditions to date. However, a straightforward generalization of Kouhia and Stenberg's approach to three dimensions does not yield a stable pair with a discrete Korn's inequality, as shown by Hu and Schedensack~\cite{2019HuSchedensack}.  
More recently, in~\cite{LiRui2021a}, the authors used a continuous, piecewise-linear space enriched by the Raviart-Thomas element for the velocity, and piecewise constants for the pressure. It has been shown that such discretization is well-posed and achieves optimal convergence order. However, this usually leads to a large linear system due to the extra degrees of freedom of the Raviart-Thomas element.

%%%%%%%%%%%%%%%%%%%%%%%%%%%%%%%%%%%%%%%%%%%%%%%%%%%%%

In this present work, we consider an Enriched Galerkin (EG) method based on simplicial grids to construct inf-sup stable finite elements for the Stokes system with mixed boundary conditions in both two and three dimensions. EG methods are a special type of finite-element method, whose solution spaces consist of continuous Lagrange finite elements enriched with some discontinuous functions. To compensate the inconsistency of the bilinear forms, EG methods utilize DG-like variational formulations. 
{ 
Thus, EG methods can be easily implemented by modifying standard FEM codes.}
Most existing EG methods enrich linear Lagrange elements with piecewise constant functions \cite{SunLiu09, LeeLeeWhe2016, LeeWhe2016_EG, ChaabaneEtAl2018, ChooLee2018, VamarajuElAl18, KuzHajRupp20, OuardghiEtAl21}.  They have been employed to study various problems, such as modeling flow and transport in porous media \cite{LeeWhe2016_EG, LeeWhe2017_egtwo, rupp2020continuous, kadeethum2020flow, ArbogastTao2019},  the shallow water equations \cite{hauck2020enriched}, and poroelasticity problems \cite{ChooLee2018, GiraultEtAl2020,kadeethum2020finite,kadeethum2020enriched}, and, in particular, the Stokes equations \cite{ChaabaneEtAl2018}.

{
Here, we propose a new EG method, in which the velocity space is the linear Lagrange elements enriched with certain piecewise linear, mean-zero vector functions and whose weak form is the standard weak form used in the interior penalty DG method for the Stokes problem. On the other hand, the EG method developed in \cite{ChaabaneEtAl2018} uses the linear Lagrange elements enriched with piecewise constants for the velocity and employs a non-standard weak form. Our velocity space has nontrivial divergence-free functions and provides inf-sup stable elements when paired with the piecewise constant pressure space for the Stokes equations. 
Indeed, this EG space was also utilized  to  provide locking-free displacement solutions for linear elasticity~\cite{YiLeeZikatanov21}.
With our new approach, we substantially reduce the number of degrees of freedom compared to existing inf-sup stable finite elements for the Stokes equations. In this paper, we prove the LBB stability and optimal-order error estimates for our new EG method. 
}
 %in this paper. } 
%Very recently, Yi et al. \cite{YiLeeZikatanov21} proposed a {\color{red} new} EG method for linear elasticity, in which the solution space for the displacement is the linear Lagrange element enriched with certain piecewise linear, mean-zero vector functions. This enrichment requires only one degree of freedom per element, yet the resulting enriched solution space has nontrivial divergence-free functions {\color{red} and provides locking-free solutions}. To develop a stable EG method for the Stokes equations, we adopt the same EG space as in \cite{YiLeeZikatanov21} for the velocity along with the piecewise constant space for the pressure. 
%Compared to \cite{ChaabaneEtAl2018} which adds discontinuous constant functions, our proposed method has less degrees of freedom without modifying bilinear forms and any restrictions. }

%{\color{red}
%We prove the LBB stability for the new EG elements and show optimal order error estimates. 
%}
Finally, we develop and analyze a class of optimal preconditioners for the resulting discrete linear system. There have been several works recently that provide efficient solvers for the Stokes system.  These include multigrid methods on structured or semistructured grids based on distributive relaxation~\cite{2011BacutaVassilevskiZhang-a}, Uzawa, Vanka, and Braess-Sarazin relaxation~\cite{2018LuoRodrigoGasparOosterlee-a,2014GasparRodrigoHeidenreich-a,2017AdlerJ_BensonT_MacLachlanS-aa}, and auxiliary space preconditioning~\cite{2016HongKrausXuZikatanov-a}.  Such multilevel preconditioners require a sequence of grids and a corresponding hierarchy of spaces.  To avoid issues related to systems discretized on fully unstructured grids, we follow an operator preconditioning framework described in \cite{2004LonghinWathen-a,2011MardalWinther-a}.  We obtain similar results as those reported in~\cite{2016Ma-a}, leading to uniform block diagonal, lower, and upper triangular preconditioners.  Moreover, the action of each of these preconditioners is readily computed using algebraic multigrid methods as they only require the solution of symmetric positive definite systems.

The presentation of our results traces the following virtual landscape. The notation and some definitions are reviewed in Section~\ref{sec:prelim}. The EG scheme is described in Section~\ref{sec:EG} and the well-posedness of the discrete problem is shown in Section~\ref{sec:well-posedness}. The error analysis follows in Section~\ref{sec:convergence}, with an introduction and analysis of the block preconditioners in Section~\ref{sec:block-preconditioners}. Numerical results are reported in Section~\ref{sec:numerical-results}, and concluding remarks and a discussion of future work are given in Section~\ref{sec:conclusions}.

%%%%%%%%%%%%%%%%%%%%%%%%%%%%%%%%%%%%%%%%%%%%%%%%%%%%%%%%%%%%%%%%%%%%%%%%%%%%%%%%%%%%%%

\section{Notation and preliminaries}\label{sec:prelim}
We introduce the notation and preliminary results needed throughout the rest of the paper.  
Following \cite{adams2003sobolev}, we use the standard notation for Sobolev spaces $H^{s}(E)$ for a domain $E\subset\mathbb{R}^d$, and integer $s$, corresponding to spaces of functions with square integrable derivatives up to order $s$. We also define the space 
$\Ltwoz$ of functions from $\Ltwo$ with zero mean. The Sobolev norm and seminorm  associated with $H^s(E)$ are denoted by $\|\cdot\|_{s,E}$ and $|\cdot|_{s,E}$, respectively. These definitions extend in an obvious fashion to vector and tensor-valued functions. 
We note the standard convention $H^0(E)=L^2(E)$. When we would like to emphasize the role of the domain $E$, we denote by $(\cdot, \cdot)_E$ the $L^2(E)$-inner product on $E$.

We consider a shape regular, simplicial partition of the computational domain,
$\displaystyle \overline{\Omega}=\cup_{\K\in \Th}\overline{\K}$ where
$\K\in\Th$ are triangles when $d=2$ and tetrahedrons when $d=3$. The
so called ``broken'' Sobolev spaces $H^{s}(\Th)$ are the natural
spaces to work with when analyzing DG and EG methods: these are the spaces of square integrable functions on
$\Omega$ whose restrictions on $\K\in\Th$ are in $H^{s}(\K)$. Next, we denote by $\nK$
the unit outward normal vector to $\partial \K$, $\K \in \Th$ and introduce the characteristic mesh size
\[
h=\max_{\K\in\Th} h_{\K}, \quad h_{\K}=\operatorname{diam}(\K),\;\;\K \in \Th.
\]
The collection of all interior edges or faces is denoted by $\Eho$, and $\Ehb$ represents the collection of all boundary edges or faces.
Then, $\Eh = \Eho \cup \Ehb$ is the collection of all edges or faces in $\Th$. Moreover, $\Ehb$ is decomposed into two subsets: 
$\Ehbd$ and $\Ehbn$, which are the collections of the boundary edges or faces belonging to $\Gammad$ and $\Gamman$, respectively. 
For any $e \in \Eho$, there are  two neighboring elements $\K^{+}$ and $\K^{-}$  such that  $e = \partial \K^{+}\cap \partial \K^{-}$. We associate one unit normal vector $\nne$ with $e$, which is assumed to be  oriented from $\K^+$ to $\K^-$. If $e \in \Ehb$, then $\nne$ is taken to be the unit outward normal vector  to $\partial \Omega$.
For $e \in \Eho$, in general, there will be two traces of a function $\zeta$ on $e$, i.e., $\zeta$ is not single-valued on $e$.
Thus, for $\zeta\in H^{1}(\Th)$ and $e=\partial \K^{+}\cap \partial \K^{-}$, $\zeta^{\pm}$ are the traces of $\zeta$ on $e$.
We then define the average and jump operators,  $\av{\cdot}$ and $\jump{\cdot}$ as follows:
\begin{equation*}%\label{av-w}
	\begin{aligned}
		& \av{\zeta} := \dfrac{1}{2}\left( \zeta^+ +  \zeta^- \right), \quad \jump{\zeta} = \zeta^+- \zeta^-, \quad \mbox{for}\quad e\in \Eho.\\
		& \av{\zeta} = \jump{\zeta} :=  \zeta, \quad \mbox{for}\quad e \in \Ehb.
	\end{aligned}
\end{equation*}

Finally, let $|\K|$ denote the volume of a simplex $\K$ in $\mathbb{R}^d$, let $|e|$ denote the area/length of face $e\in\partial\K$, and define
$\mathbb{P}_k(\K)$ to be the space of polynomials of total degree at most $k\ge 0$.
In the analysis that follows, we also use the well-known trace inequalities which hold for any $v \in H^s(\K), \, s \geq 1$, $q\in \mathbb{P}_k(\K)$, and for every $e \subset \partial \K$ and $\K\in \Th$,
\begin{eqnarray}
	\norm{v}_{0, e} &\leq& C_t |e|^{1/2} | \K |^{-1/2} \left( \norm{v}_{0, \K} + h_\K \norm{\nabla v}_{0, \K}\right),
	\label{eqn:trace1}\\
	\norm{q}_{0, e} &\leq& \widetilde{C}_t(k) |e|^{1/2} | \K |^{-1/2} \norm{q}_{0, \K}.
	\label{eqn:trace2}
\end{eqnarray}
Here, $C_t$ is independent of $h_\K$ and $\widetilde{C}_t(k)$ depends on the polynomial degree $k$.
We note that the definitions of $\jump{\cdot}$ and $\av{\cdot}$ given earlier as
well as the trace inequalities above are easily shown to hold also for  vector and tensor-valued functions.

%%%%%%%%%%%%%%%%%%%%%%%%%%%%%%%%%%%%%%%%%%%%%%%%%%%
\section{Weak Form and Enriched Galerkin Scheme}\label{sec:EG}
In this section, we present a weak form of the model problem \eqref{eqn: stokes} and propose an enriched Galerkin method for discretization. 
If $| \Gamman | = 0$,  the pressure $p$ is uniquely defined up to an additive constant. Therefore,  we assume that $p$ is mean-zero  in that case. Then, a weak solution to the problem \eqref{eqn: stokes} is the pair $(\bu, p) \in \Honed \times \Ltwoz$ if $| \Gamman | = 0$ or $(\bu, p) \in \Honed \times \Ltwo$ if $| \Gamman | > 0$ such that $\bu|_{\Gammad} = \bg$ and
\begin{subequations}\label{eqn: weak}
\begin{alignat}{2}
2 \mu (\epsilon(\bu), \epsilon (\bv)) - (p, \divv \bv) & = (\bff, \bv) + (\bs, \bv)_{\Gamman},  \quad  &&\forall \bv \in \Honedzd, \label{weak1} \\
-(\divv \bu, w ) & = 0, \quad && \forall w \in \Ltwoz \text{ if } |\Gamman| =0 \text { or }  \Ltwo \text{ if } |\Gamman| > 0. \label{weak2}
\end{alignat}
\end{subequations}
Here, $\Honedzd$ is the space of vector valued functions from $\Honed$ whose traces on $\Gammad$ vanish.

To introduce the EG finite-element space for the velocity variables, we start by considering the vector-valued linear CG finite-element space:
\begin{equation*}
	\Ch := \left \{ \psi \in \Honed ~| \ \psi|_{\K} \in [\mathbb{P}_1(\K)]^d, \ \forall \K \in \mathcal{T}_h \right \}.
\end{equation*}
Then, the EG space for velocity, defined as $\cVh$, is obtained by extending $\Ch$ with the space,
\[
\Done = \left \{ \psi \in \Ltwod \ | \ \psi|_{\K} = c_\K (\bx - \bx_\K), \, c_\K \in \mathbb{R}, \ \forall \K \in \mathcal{T}_h \right \},
\]
where $\bx = [ x_1, \cdots, x_d]^T$ and $\bx_\K$ is the centroid of $\K \in \Th$.
Since $\Ch$ and $\Done$ are disjoint spaces, we have that
\[
\cVh :=   \Ch \oplus \Done \subset \Ltwod.
\]
For the pressure $p$, we simply use the piecewise constant function space: 
\[
\cWh : = \begin{cases}
 \{ \phi \in \Ltwo \ | \ \phi|_\K \in \mathbb{P}_0(\K), \ \forall \K \in \Th\} \cap \Ltwoz & \text{if } | \Gamman | = 0, \\
 \{ \phi \in \Ltwo \ | \ \phi|_\K \in \mathbb{P}_0(\K), \ \forall \K \in \Th\} & \text{if } | \Gamman | > 0.
 \end{cases}
\]
With these spaces, we define the following bilinear and linear forms to discretize \eqref{eqn: weak}:
\begin{align*}
	\bah(\bu, \bv) & := 
	2 \mu \sum_{\K \in \Th} (\epsilon(\bu), \epsilon (\bv) )_\K
	- 2 \mu \dyle\sumEod ( \av{ \epsilon(\bu)}\bne, \jump{\bv} )_e \nonumber \\
	& \qquad + 2 \mu \theta  \dyle \sumEod  ( \jump{\bu}, \av{ \epsilon (\bv)} \bne)_e 
	+  2 \mu \dyle\sumEod  \frac{\alpha}{h_e} ( \jump{\bu}, \jump{\bv} )_e, \nonumber  \\
	\bbh(\bv, w) & := - \sumK(w, \divv \bv)_\K + \sumEod (\av{w}, \jump{\bv}\cdot \bne)_e, \\
	\bF_\theta(\bv) & = \sumK(\bff, \bv)_\K + \sumEn (\bs, \bv)_{e}   +  
	2 \mu \theta  \dyle \sumEd  ( \jump{\bg}, \av{ \epsilon (\bv)} \bne)_e +  2 \mu \dyle\sumEd  \frac{\alpha}{h_e}  ( \jump{\bg}, \jump{\bv} )_e,
\end{align*}
where $h_{e} = |e|^{\frac{1}{d-1}}$, $\theta$ is a symmetrization parameter chosen from $\{-1, 0, 1 \}$,  and $\alpha >0$ is called the penalty parameter. In general, $\alpha$ may vary over each edge in the domain, but for simplicity we assume that it is constant in this paper. 
With the above definitions, the EG method for solving the Stokes equations reads: Find $(\buh, \ph) \in \cVh \times \cWh$ such that
\begin{subequations}\label{eqn: eg}
	\begin{alignat}{2}
		\bah(\buh, \bv) + \bbh(\bv, \ph) & = \bF_\theta(\bv) , && \quad \forall \bv \in \cVh, \label{eg1} \\
		\bbh(\buh, w) & = \sumEd (\av{w}, \jump{\bg}\cdot \bne)_e, && \quad \forall w \in \cWh. \label{eg2}
	\end{alignat}
\end{subequations}
Here, the choice of $\theta$ yields three different methods: NIPG if $\theta = 1$, SIPG if $\theta = -1$, and IIPG if $\theta = 0$. 

To conclude this section, we prove that the discrete weak form is consistent.  
\begin{lemma}[Consistency]\label{lem: consis}
	Assume that the weak solution, $(\bu, p)$, belongs to $[H^t(\Omega)]^d \times H^s(\Omega)$, $t > 1.5$ and $s > 0.5$, and that the discrete EG problem, \eqref{eqn: eg}, has a solution, $(\bu_h,p_h)$. Then, \eqref{eqn: eg} is consistent with the weak problem, \eqref{eqn: weak}, in the sense that
	\begin{subequations}\label{eqn: consis}
		\begin{alignat}{3}
			\bah(\bu-\buh, \bv) +\bbh(\bv, p - \ph) &  = \bzero,  && \quad  \bv \in  \cVh, \label{consis1} \\
			\bbh(\bu - \buh, w ) &  = 0, &&\quad w \in \cWh.
			\label{consis2}  
		\end{alignat}
	\end{subequations}
\end{lemma}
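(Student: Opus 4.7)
The plan is to plug the strong solution $(\bu, p)$ directly into the EG forms and verify it satisfies the same equations as $(\buh, \ph)$. Because $\bu \in [H^t(\Omega)]^d$ with $t > 3/2$ and $p \in H^s(\Omega)$ with $s > 1/2$, the traces $\epsilon(\bu)|_e$ and $p|_e$ are well-defined in $L^2(e)$ for every $e \in \Eh$, the function $\bu$ is continuous across interior faces, and the Cauchy stress $\sigma := 2\mu \epsilon(\bu) - p \bI$ has single-valued normal component on every $e \in \Eho$. Consequently, $\jump{\bu} = 0$, $\av{\epsilon(\bu)} = \epsilon(\bu)$, $\av{p} = p$, and $\jump{\sigma \bne} = 0$ on each interior face, while $\jump{\bu} = \bg$ on $\Ehbd$ by \eqref{bdry-d}.

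The first step is to multiply \eqref{stokes1} by $\bv \in \cVh$ and integrate by parts on each $\K \in \Th$. Using the symmetry identity $\epsilon(\bu) : \grad \bv = \epsilon(\bu) : \epsilon(\bv)$, this produces
\[
2\mu (\epsilon(\bu),\epsilon(\bv))_\K - (p,\divv\bv)_\K - (\sigma \bn_\K, \bv)_{\partial \K} = (\bff, \bv)_\K.
\]
Summing over $\K$ and applying the standard DG face decomposition together with the algebraic identity $\jump{\sigma \bne \cdot \bv} = \av{\sigma \bne}\cdot\jump{\bv} + \jump{\sigma \bne}\cdot\av{\bv}$, the interior $\jump{\sigma \bne}$ contributions vanish by regularity; the Neumann boundary pieces rewrite as $\sumEn(\bs,\bv)_e$ via \eqref{bdry-t}; and the Dirichlet boundary pieces merge with the interior averages (using the convention $\av{\zeta} = \zeta$ on $\Ehb$) into one sum over $\Eho \cup \Ehbd$. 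Expanding $\sigma$ then gives
\[
2\mu\sumK(\epsilon(\bu),\epsilon(\bv))_\K - \sumK(p,\divv\bv)_\K - 2\mu\sumEod(\av{\epsilon(\bu)}\bne,\jump{\bv})_e + \sumEod(\av{p},\jump{\bv}\cdot\bne)_e = \sumK(\bff,\bv)_\K + \sumEn(\bs,\bv)_e.
\]

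Next, I would formally add the symmetrization and penalty contributions $2\mu\theta\sumEod(\jump{\bu},\av{\epsilon(\bv)}\bne)_e + 2\mu\sumEod\frac{\alpha}{h_e}(\jump{\bu},\jump{\bv})_e$ to both sides; since $\jump{\bu} = 0$ on $\Eho$ and $\jump{\bu} = \bg$ on $\Ehbd$, these collapse to boundary sums over $\Ehbd$ involving $\bg$, which are exactly the last two terms of $\bF_\theta(\bv)$. Rearranging yields $\bah(\bu,\bv) + \bbh(\bv,p) = \bF_\theta(\bv)$, and subtracting \eqref{eg1} gives \eqref{consis1}. For \eqref{consis2}, the incompressibility \eqref{stokes2} kills the volume part of $\bbh(\bu,w)$, while the facet part collapses to $\sumEd(\av{w},\bg\cdot\bne)_e$, which cancels the right-hand side of \eqref{eg2}. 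The only real obstacle is careful bookkeeping: tracking how each face contributes, watching signs imposed by the orientation convention for $\bne$, and verifying that the regularity of $(\bu,p)$ is enough to justify the elementwise integration by parts and all the trace assignments---the argument itself is purely algebraic and uses no inequalities.
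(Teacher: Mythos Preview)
Your proposal is correct and follows essentially the same route as the paper: integrate \eqref{stokes1} by parts elementwise, collapse the face sums using the jump/average identity together with the regularity-induced vanishing of $\jump{\sigma\bne}$ and $\jump{\bu}$ on interior faces, insert the symmetrization and penalty terms (trivially on $\Eho$, as $\bg$-terms on $\Ehbd$), and then subtract \eqref{eqn: eg}. The paper's own proof differs only in presentation---it writes out one more intermediate display and treats the interior and Dirichlet-boundary penalty terms in two steps rather than one---but the underlying argument is identical.
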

\begin{proof}
  If we multiply  \eqref{stokes1} by any test function $\bv \in \cVh$, use integration by parts on each mesh element $\K \in \Th$, and then sum over $\K \in \Th$, we have on the left side
  \begin{equation}\label{eqn: consis1}
	\begin{aligned}
		& - \sumK ( \divv (2 \mu \epsilon (\bu)  - p \bI), \bv)_\K  \\
		& = -\sumK ((2 \mu \epsilon (\bu) -p \bI)  \nK, \bv)_{\partial \K} 
		+ \sumK ( 2 \mu \epsilon (\bu)  - p \bI, \nabla \bv)_\K  \\
		& = -\sumE (\av{(2 \mu \epsilon (\bu) -p \bI) \nne}, \jump{\bv})_{e}  -\sumEo (\jump{(2 \mu \epsilon (\bu) -p \bI)  \nne}, \av{\bv})_{e}   \\
		& \qquad + 2 \mu \sumK  (\epsilon(\bu), \epsilon (\bv) )_\K -\sumK (p, \divv \bv)_\K.
              \end{aligned}
              \end{equation}
 Then, using the Neumann boundary condition \eqref{bdry-t}  and noting that the second term on the right is zero when  $\bm{u}$ and $p$ are solutions to~\eqref{stokes1}--\eqref{bdry-t}
        we obtain
        \begin{equation*} %\label{eqn: consis12}
	\begin{aligned}
		& - \sumK ( \divv (2 \mu \epsilon (\bu)  - p \bI), \bv)_\K  \\
		& = -\sumEn(\bs, \bv)_e - 2 \mu  \sumEod (\av{\epsilon (\bu)},  \jump{\bv})_{e}  + \sumEod ( \av{p} , \jump{\bv}\cdot \nne)_{e}
		\\
		& \qquad + 2 \mu \sumK  (\epsilon(\bu), \epsilon (\bv) )_\K -\sumK (p, \divv \bv)_\K  \\
		& \qquad \quad  +  2 \mu \theta  \dyle \sumEo  ( \jump{\bu}, \av{ \epsilon (\bv)} \bne)_e 
		+ 2 \mu  \dyle\sumEo  \frac{\alpha}{h_e} ( \jump{\bu}, \jump{\bv} )_e,
              \end{aligned}
              \end{equation*}
    where the last two terms were added without changing the above bilinear form since $\jump{\bu} = 0$ on any $e \in \Eho$. On the other hand, we have $ \sumK(\bff, \bv)_\K$ on the right side. 
              Next, by adding
             \(2 \mu \theta  \dyle \sumEd ( \jump{\bu}, \av{ \epsilon (\bv)} \bne)_e 
             +  \dyle 2\mu\sumEd  \frac{\alpha}{h_e} ( \jump{\bu}, \jump{\bv} )_e\)
             to both sides and using the Dirichlet boundary condition on $\Gammad$, we obtain
	\begin{subequations}\label{eqn: consis4}
		\begin{equation}\label{eqn: consis2}
			\bah(\bu, \bv) + \bbh(\bv, p)  = \bF_\theta(\bv) \quad \forall \bv \in \cVh.
		\end{equation}
		Similar calculations yield
		\begin{equation}\label{eqn: consis3}
			\bbh(\bu, w)  = \sumEd (\av{w}, \jump{\bg}\cdot \bne)_e \quad \forall w \in \cWh.
		\end{equation}
	\end{subequations}
	Hence, the consistency equations \eqref{eqn: consis} follow by subtracting \eqref{eqn: eg} from \eqref{eqn: consis4}.
\end{proof}

\section{Well-posedness}\label{sec:well-posedness}
The well-posedness of the proposed EG scheme, \eqref{eqn: eg}, for solving the Stokes equations follows from Brezzi thoery~\cite{1974Brezzi-a} by verifying the appropriate continuity/coercivity conditions on the bilinear forms $\ba_{\theta}$ and $\bbh$.  We also follow the Babuska theory~\cite{1973Babuska-a}, which will be useful later for designing parameter-robust preconditioners.

\subsection{Coercivity and Continuity of $\ba_{\theta}$}
We introduce the following inner product for $\alpha >0$:
\begin{equation*}
  (\bu, \bv)_{\mathcal{E}} := 2\mu \left( \sumK (\epsilon(\bu), \epsilon(\bv))_\K + \sumEod \frac{\alpha}{h_e} (\jump{\bu}, \jump{\bv})_{e} \right),	
  %\enorm{\bv} = \sqrt{2\mu} \left( \sumK \norm{\epsilon(\bv)}_{0, \K}^2  + \sumEod \frac{\alpha}{h_e} \norm{\jump{\bv}}_{0, e}^2 \right)^\half, \quad \forall \bv \in \cVh.
\end{equation*}
which induces the energy norm 
\begin{equation*}
	\enorm{\bv} := \sqrt{(\bv,\bv)_{\mathcal{E}}}.
\end{equation*}
We first show the coercivity and continuity of the bilinear $\ba_{\theta}$ on $\cV_h$. Assuming that $| \Gammad | >0$, the generalized Korn's inequality in the broken Sobolev space $H^1(\Th)$ states that there is a constant $C_{K} > 0$ such that
\begin{equation*}%\label{eqn: korn}
	\sumK \norm{\bv}_{1, \K}^2 \leq C_K \left( \sumK \norm{\epsilon(\bv)}_{0, \K}^2  + \sumEod \frac{1}{h_e} \norm{\jump{\bv}}_{0, e}^2 \right).
\end{equation*}

\begin{lemma}[Coercivity of $\bah$]\label{lem: u-coer}
	Assume that $\alpha$ is sufficiently large for the cases of $\theta = -1$ or $\theta = 0$, and any value for $\theta=1$. Then, there exists a constant $\chi > 0$ independent of $h$ such that 
	\begin{equation}\label{eqn: coer}
		\bah(\bv, \bv) \ge \chi \enorm{\bv}^2, \quad \forall \bv \in \cVh.
	\end{equation}
\end{lemma}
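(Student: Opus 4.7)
The plan is to expand $\bah(\bv,\bv)$ and isolate the non-symmetric cross term. Using the definition of $\bah$, we have
\[
\bah(\bv,\bv) = \enorm{\bv}^2 + 2\mu(\theta-1)\sumEod (\av{\epsilon(\bv)}\bne,\jump{\bv})_e.
\]
When $\theta=1$ (NIPG), the cross term vanishes identically, so coercivity holds with $\chi=1$ for any $\alpha>0$. Thus the content of the lemma is in the cases $\theta\in\{-1,0\}$, where we need to absorb the cross term into the two positive contributions of $\enorm{\bv}^2$.

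For those cases, the key observation is that for every $\bv \in \cVh$ the tensor $\epsilon(\bv)$ is piecewise polynomial: for the $\Ch$ component it is a piecewise constant (from a $\mathbb{P}_1$ field), and for a $\Done$ element $\bv|_\K = c_\K(\bx-\bx_\K)$ we get $\epsilon(\bv)|_\K = c_\K\bI$. Hence the discrete trace inequality \eqref{eqn:trace2} applies. Combined with shape regularity and $h_e \sim |e|^{1/(d-1)}$, this yields
\[
h_e\,\|\av{\epsilon(\bv)}\bne\|^2_{0,e} \le C\bigl(\|\epsilon(\bv)\|^2_{0,\K^+}+\|\epsilon(\bv)\|^2_{0,\K^-}\bigr),
\]
for each interior face $e$, and similarly for faces in $\Ehbd$. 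Summing over $e\in \Eho\cup\Ehbd$ and using that a shape-regular simplex has a bounded number of faces gives
\[
\sumEod h_e\,\|\av{\epsilon(\bv)}\bne\|^2_{0,e} \le C_0 \sumK \|\epsilon(\bv)\|^2_{0,\K}.
\]

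With this bound in hand, apply Cauchy--Schwarz face-wise together with a weighted summation using the factor $h_e^{-1/2}$ attached to $\jump{\bv}$, followed by Young's inequality with parameter $\eta>0$:
\[
\Bigl|\sumEod (\av{\epsilon(\bv)}\bne,\jump{\bv})_e\Bigr| \le \frac{C_0}{4\eta}\sumK\|\epsilon(\bv)\|^2_{0,\K} + \eta \sumEod \frac{1}{h_e}\|\jump{\bv}\|^2_{0,e}.
\]
Plugging this into the expression for $\bah(\bv,\bv)$, I obtain
\[
\bah(\bv,\bv) \ge 2\mu\Bigl(1-\frac{|\theta-1|C_0}{4\eta}\Bigr)\sumK\|\epsilon(\bv)\|^2_{0,\K} + 2\mu\sumEod \frac{\alpha-|\theta-1|\eta}{h_e}\|\jump{\bv}\|^2_{0,e}.
\]
Choosing $\eta=|\theta-1|C_0/2$ makes the first coefficient equal to $1/2$, and then requiring $\alpha > \eta|\theta-1| = |\theta-1|^2 C_0/2$ (i.e., $\alpha$ sufficiently large, the threshold depending only on shape regularity and $|\theta-1|$) makes the second coefficient positive. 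Taking $\chi$ as the minimum of $1/2$ and the fraction $(\alpha-|\theta-1|\eta)/\alpha$ then yields \eqref{eqn: coer}.

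The main technical obstacle is the trace-type inequality on $\av{\epsilon(\bv)}\bne$: one must be careful that the enriched component in $\Done$ still produces a piecewise polynomial $\epsilon$, so that the sharp constant $\widetilde{C}_t(k)$ from \eqref{eqn:trace2} (rather than a mesh-dependent one) can be used. Beyond that, the argument is the standard absorption strategy for DG-type interior penalty coercivity, and no Korn inequality is needed since the energy norm already controls $\epsilon(\bv)$ directly.
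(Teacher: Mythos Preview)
Your proof is correct and follows essentially the same route as the paper: expand $\bah(\bv,\bv)$, note that the cross term vanishes when $\theta=1$, and for $\theta\in\{-1,0\}$ bound it via Cauchy--Schwarz, the discrete trace inequality \eqref{eqn:trace2}, and Young's inequality so that it can be absorbed into the two positive pieces of $\enorm{\bv}^2$. The only cosmetic difference is that the paper builds the penalty parameter $\alpha$ into the Cauchy--Schwarz weighting directly (writing $\frac{h_e}{\alpha}$ and $\frac{\alpha}{h_e}$), whereas you introduce a free Young parameter $\eta$ and optimize afterward; both lead to the same threshold $\alpha\gtrsim(1-\theta)^2C_0$ and $\chi=\tfrac12$.
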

\begin{proof}
	For any $\bv \in \cVh$, 
	\begin{equation}\label{eqn: coer1}
		\bah(\bv, \bv)  = 
		2 \mu \left( \sum_{\K \in \Th} (\epsilon(\bv), \epsilon (\bv) )_\K
		-  (1 - \theta ) \dyle \sumEod  ( \jump{\bv}, \av{ \epsilon (\bv)} \bne)_e 
		+   \dyle\sumEod  \frac{\alpha}{h_e} ( \jump{\bv}, \jump{\bv} )_e\right).
	\end{equation}
	If $\theta = 1$, $\bah(\bv, \bv) $ reduces to
	\[
	\bah(\bv, \bv)  
	% 2 \mu \sum_{\K \in \Th} (\epsilon(\bv), \epsilon (\bv) )_\K
	%+  \dyle\sumEod  \frac{\alpha}{h_e} ( \jump{\bv}, \jump{\bv} )_e
	=  2 \mu \left( \sum_{\K \in \Th} \norm{\epsilon(\bv)}_{0, \K}^2
	+  \sumEod  \frac{\alpha}{h_e}  \norm{\jump{\bv}}_{e}^2 \right).
	\]
	Therefore, \eqref{eqn: coer} holds true with $\chi = 1$.  On
        the other hand, if $\theta = 0$ or $\theta = -1$, we first
        estimate the second term in the parentheses in \eqref{eqn: coer1}  by
	\begin{align}\label{eqn: coer2}
		& | - (1 -  \theta ) \dyle \sumEod  ( \jump{\bv}, \av{ \epsilon (\bv)} \bne)_e |  \nonumber \\
		&\leq  (1 -  \theta )\left(  \sumEod  \frac{\alpha}{h_e}  \norm{\jump{\bv}}_{e}^2 \right)^\half 
		\left( \sumEod   \frac{h_e}{\alpha}  \norm{ \av{ \epsilon (\bv)} \bne}_e^2 \right)^\half  \nonumber \\
		&\leq  (1 -  \theta )  \frac{\sqrt{C}}{\sqrt{\alpha}}  \left(  \sumEod  \frac{\alpha}{h_e}  \norm{\jump{\bv}}_{e}^2 \right)^\half 
		\left( \sumK   \norm{ \epsilon (\bv)}_{0, \K}^2 \right)^\half  \nonumber \\
		&\leq \frac{1}{2}   \sumK  \norm{ \epsilon (\bv)}_{0, \K}^2 
		+  \frac{1}{2}(1- \theta)^2 \frac{C}{\alpha} \sumEod  \frac{\alpha}{h_e}  \norm{\jump{\bv}}_{e}^2,
	\end{align}
        where $C$ is a generic constant depending on $\widetilde{C}_t$ from~\eqref{eqn:trace2}.
	Then, using \eqref{eqn: coer1} and \eqref{eqn: coer2} together, we obtain
	\begin{align*}
		\bah(\bv, \bv) & = 
		2 \mu \left( \sumK   \norm{ \epsilon (\bv)}_{0, \K}^2
		-  (1 - \theta ) \dyle \sumEod  ( \jump{\bv}, \av{ \epsilon (\bv)} \bne)_e 
		+  \sumEod \frac{\alpha}{h_e} \norm{\jump{\bv}}_{0, e}^2 \right) \\
		& \ge 2\mu  \left(\frac{1}{2} \sum_{\K \in \Th} \norm{\epsilon(\bv)}_{0, \K}^2
		+ \left( 1 - \frac{1}{2}(1 -  \theta )^2  \frac{C}{\alpha} \right)\sumEod \frac{\alpha}{h_e} \norm{\jump{\bv}}_{0, e}^2 \right).
	\end{align*}
	If we take $\alpha \ge (1 -  \theta )^2C$, then the coefficient of the second term is no less than $1/2$.
	As a result, we have
	\begin{align*}
		\bah(\bv, \bv) & \ge  2\mu \left( \frac{1}{2} \sum_{\K \in \Th} \norm{\epsilon(\bv)}_{0, \K}^2 + \half \sumEod \frac{\alpha}{h_e} \norm{\jump{\bv}}_{0, e}^2  \right)
		\ge \half  \enorm{\bv}^2,
	\end{align*}
	from which \eqref{eqn: coer} follows with $\chi = \half$.
\end{proof}

%Similarly, we can show the continuity of $\bah(\cdot, \cdot)$ in the following lemma.
\begin{lemma}[Continuity of $\bah$]\label{lem:u-conti}
	Assume that $\alpha$ is sufficient large, then we have
	\begin{equation} \label{ine:u-conti}
		|\bah(\bu, \bv)| \leq 2 \enorm{\bu} \enorm{\bv}, \quad \forall \, \bu, \ \bv \in \cV_h.
	\end{equation}	
\end{lemma}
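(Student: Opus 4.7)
The plan is to decompose $\bah(\bu,\bv)$ into three natural pieces and bound each by the energy norm. Write
\[
\bah(\bu,\bv) = (\bu,\bv)_{\mathcal{E}} - 2\mu\sumEod (\av{\epsilon(\bu)}\bne,\jump{\bv})_e + 2\mu\theta\sumEod (\jump{\bu},\av{\epsilon(\bv)}\bne)_e,
\]
using the definition of the energy inner product. The first piece is bounded by Cauchy--Schwarz applied to the inner product itself: $|(\bu,\bv)_{\mathcal{E}}| \leq \enorm{\bu}\enorm{\bv}$, contributing the first ``1'' in the target constant of $2$. The remaining two terms are the consistency and adjoint-consistency contributions, and must together be shown to contribute at most another $\enorm{\bu}\enorm{\bv}$, which is where the largeness assumption on $\alpha$ enters.

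For a consistency term such as $2\mu\sumEod (\av{\epsilon(\bu)}\bne, \jump{\bv})_e$, insert the weights $\sqrt{h_e/\alpha}$ and $\sqrt{\alpha/h_e}$ and apply Cauchy--Schwarz to obtain
\[
\left| 2\mu \sumEod (\av{\epsilon(\bu)}\bne, \jump{\bv})_e \right| \leq 2\mu \Bigl(\sumEod \tfrac{h_e}{\alpha} \|\av{\epsilon(\bu)}\bne\|_{0,e}^2\Bigr)^{1/2} \Bigl( \sumEod \tfrac{\alpha}{h_e}\|\jump{\bv}\|_{0,e}^2\Bigr)^{1/2}.
\]
The second factor is bounded by $\enorm{\bv}/\sqrt{2\mu}$ by definition of the energy norm. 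For the first factor, since $\epsilon(\bu)|_\K$ is a polynomial on each $\K$, apply the discrete trace inequality \eqref{eqn:trace2} to $\epsilon(\bu)|_\K$, together with $h_e|e|/|\K| = O(1)$ from shape regularity, to get
\[
\sumEod \frac{h_e}{\alpha}\|\av{\epsilon(\bu)}\bne\|_{0,e}^2 \leq \frac{C}{\alpha}\sumK \|\epsilon(\bu)\|_{0,\K}^2 \leq \frac{C}{\alpha}\cdot\frac{\enorm{\bu}^2}{2\mu},
\]
so this consistency term is controlled by $\sqrt{C/\alpha}\,\enorm{\bu}\enorm{\bv}$. The $\theta$-term is treated by swapping the roles of $\bu$ and $\bv$ and yields another $|\theta|\sqrt{C/\alpha}\,\enorm{\bu}\enorm{\bv}$.

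Collecting the three contributions gives $|\bah(\bu,\bv)| \leq \bigl(1 + (1+|\theta|)\sqrt{C/\alpha}\bigr)\enorm{\bu}\enorm{\bv}$, and choosing $\alpha$ large enough that $(1+|\theta|)^2C \leq \alpha$ forces the parenthesized factor to be at most $2$, yielding \eqref{ine:u-conti}. The only real obstacle is cosmetic: the constant $C$ arising from the trace inequality here should be taken to be the same (or at least compatible with) the one used in the coercivity proof of Lemma~\ref{lem: u-coer}, so that a single threshold on $\alpha$ guarantees both coercivity and the continuity bound with constant exactly $2$. Everything else is bookkeeping with Cauchy--Schwarz and the polynomial trace inequality.
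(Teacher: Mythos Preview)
Your proof is correct and uses the same key ingredients as the paper (Cauchy--Schwarz plus the discrete trace inequality \eqref{eqn:trace2} to control the face terms), but the organization differs. You group the first and fourth terms of $\bah$ into the energy inner product $(\bu,\bv)_{\mathcal{E}}$ and bound it directly by Cauchy--Schwarz on that inner product, leaving only the two consistency terms to absorb via $\sqrt{C/\alpha}$; this yields the clean estimate $|\bah(\bu,\bv)|\leq \bigl(1+(1+|\theta|)\sqrt{C/\alpha}\bigr)\enorm{\bu}\enorm{\bv}$ and the threshold $\alpha\geq (1+|\theta|)^2C$. The paper instead keeps all four terms separate, applies Cauchy--Schwarz to each one individually, and then uses a second (vector) Cauchy--Schwarz to collect them into a product of two square roots of the form $\bigl[(1+C/\alpha)\sum_\K\|\epsilon(\cdot)\|^2+2\sum_e\frac{\alpha}{h_e}\|\jump{\cdot}\|^2\bigr]^{1/2}$; the constant~$2$ then comes from requiring only $\alpha>C$ so that $1+C/\alpha\leq 2$. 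Your decomposition is a bit more transparent, while the paper's route gives a slightly smaller threshold on $\alpha$; both are valid and the ``cosmetic'' concern you raise about aligning the constant $C$ with that in Lemma~\ref{lem: u-coer} is handled in the paper exactly as you suggest, by taking the larger of the two thresholds.
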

\begin{proof}
	For any $\bu, \ \bv \in \cV_h$ and following the similar steps in the proof of Lemma~\ref{lem: u-coer}, we have
	\begin{align*}
          |\bah(\bu, \bv) |
          & = 2\mu \left (\sum_{\K \in \Th} (\epsilon(\bu), \epsilon(\bv))_T - \sumEod (\av{\epsilon(\bv)}\bne. \jump{\bv})_e  \right. \\
          & \left.  \quad + \theta \sumEod(\jump{\bu}, \av{\epsilon(\bv)}\bne)_e   + \sumEod \frac{\alpha}{h_e}(\jump{\bu}, \jump{\bv})_e \right ) \\
		& \leq 2 \mu \left\{  \left(\sumK \norm{\epsilon(\bu)}_{0,T}^2\right)^{\frac{1}{2}}  \left(\sumK \norm{\epsilon(\bv)}_{0,T}^2\right)^{\frac{1}{2}} \right. \\
		& \quad  + \left( \sumEod \frac{h_e}{\alpha} \| \av{\epsilon(\bu)}\bne \|_{0, e}^2 \right)^{\frac{1}{2}} \left( \sumEod \frac{\alpha}{h_e} \norm{\jump{\bv}}_{0, e}^2 \right)^{\frac{1}{2}} \\
		& \quad  + |\theta| \left( \sumEod \frac{\alpha}{h_e} \norm{\jump{\bu}}_{0, e}^2 \right)^{\frac{1}{2}}\left( \sumEod \frac{h_e}{\alpha} \| \av{\epsilon(\bv)}\bne \|_{0, e}^2 \right)^{\frac{1}{2}}  \\
		&  \left. \quad + \left( \sumEod \frac{\alpha}{h_e} \norm{\jump{\bu}}_{0, e}^2 \right)^{\frac{1}{2}}\left( \sumEod \frac{\alpha}{h_e} \norm{\jump{\bv}}_{0, e}^2 \right)^{\frac{1}{2}} \right \}.  \\
	\end{align*}
        Using Cauchy-Schwarz inequality then shows that 
	\begin{align*}
          |\bah(\bu, \bv) |
          & \leq 2 \mu\left ( \sumK \norm{\epsilon(\bu)}_{0,T}^2 + \sumEod \frac{h_e}{\alpha} \| \av{\epsilon(\bu)}\bne \|_{0, e}^2 + \left(1+|\theta|\right) \sumEod \frac{\alpha}{h_e} \norm{\jump{\bu}}_{0, e}^2  \right)^{\half} \cdot \\
          & \quad \quad \   \left ( \sumK \norm{\epsilon(\bv)}_{0,T}^2 + |\theta|\sumEod \frac{h_e}{\alpha} \| \av{\epsilon(\bv)}\bne \|_{0, e}^2 + 2 \sumEod \frac{\alpha}{h_e} \norm{\jump{\bv}}_{0, e}^2  \right )^{\half}.
	\end{align*}	
	The trace inequality, \eqref{eqn:trace2} and the fact that $|\theta| \leq 1$ then lead to
	\begin{align*}
          |\bah(\bu, \bv) |
          & \leq 2 \mu \left\lbrack \left(1 + \frac{C}{\alpha}\right) \sum_{\K \in \Th} \norm{\epsilon(\bu)}_{0, \K}^2  +  2 \sumEod \frac{\alpha}{h_e} \norm{\jump{\bu}}_{0, e}^2 \right\rbrack^{1/2} \cdot \\
          & \quad \quad \ \left\lbrack \left(1 + \frac{C}{\alpha}\right) \sum_{\K \in \Th} \norm{\epsilon(\bv)}_{0, \K}^2  +  2 \sumEod \frac{\alpha}{h_e} \norm{\jump{\bv}}_{0, e}^2 \right\rbrack^{1/2}.
	\end{align*}	
        Then,~\eqref{ine:u-conti} follows directly when we choose $\alpha > C$ and we note that,
        as in the proof of Lemma~\ref{lem: u-coer}, the constant $C$ may depend on
        	$\widetilde{C}_t$ from \eqref{eqn:trace2} but is otherwise independent of the mesh size $h$ and $\mu$.
\end{proof}

%\begin{lemma} 
%There exists a unique solution $(\buh, \ph)$ to the EG scheme \eqref{eqn: eg}.
%\end{lemma}
%\begin{proof}
%Thanks to the finite dimensionality, it suffices to prove uniqueness of the solution. Assume that there are two solutions $(\buh^1,\ph^1)$ and $(\buh^2,\ph^2)$ to the discrete problem \eqref{eqn: eg}. Then, we have
%\begin{alignat*}{2}
%\bah(\buh^1-\buh^2, \bv) + \bbh(\bv, \ph^1 -\ph^2) & = 0, && \quad \forall \bv \in \cVh, \\
%\bbh(\buh^1 -\buh^2, w) & = 0, && \quad \forall w \in \cWh. 
%\end{alignat*}
%Taking $\bv = \buh^1-\buh^2$ and $w = \ph^1 -\ph^2$ in the above equations, we obtain
%\[
%\bah(\buh^1-\buh^2, \buh^1-\buh^2) = 0. 
%\]
%This, combined with the coercivity of $\bah$, implies that $\buh^1-\buh^2= \bzero$. 
%\end{proof}

%%%%%%%%%%%%%%%%%%%%%%%%%%%%%%%%%%%%%%%
\subsection{Discrete Inf-Sup Condition and Continuity of $\bbh$}
Next, we prove the discrete inf-sup condition for the pair of $\cVh$ and $\cWh$.  In order to prove the inf-sup condition, and later the error estimates, we need an interpolation operator from $[H^1(\Omega)]^d$ to the EG finite-element space, $\cV_h$, with a commutativity property. 
Such an interpolation operator, $\Pih$, was introduced in \cite{YiLeeZikatanov21}, and we state some useful properties of $\Pih$ here. 

\begin{proposition}
	There exists an interpolation operator $ \Pih: [H^1(\Omega)]^d \to \cVh$ that satisfies the following properties.
	\begin{subequations}
		\begin{alignat}{2}
			& | \bv -  \Pih  \bv|_j \leq C h^{m-j} | \bv|_m, \, \, 0 \leq j \leq m \leq 2, \quad && \forall  \bv \in [H^2(\Omega)]^d, \label{eqn:Pih_bd} \\
			& \snorm{\divv (  \bv -  \Pih  \bv)}_j \leq C h^{1-j} \snorm{\divv  \bv}_1, \, \, j=0, 1, \quad && \forall  \bv \in [H^2(\Omega)]^d, \label{eqn:div_err} \\
			&(\divv (  \bv -  \Pih  \bv), 1)_T = 0, \quad && \forall \K \in \Th, \label{eqn:div_pres} \\
			& \calP_0 (\divv  \bv) = \divv ( \Pih  \bv), \quad && \forall \K \in \Th, \label{eqn:comm} 
		\end{alignat}
	\end{subequations}
	where $\calP_0: H^1(\Omega)\to \cWh$ is the local $L^2$-projection and satisfies
	\begin{equation*}%\label{eqn: p0_bd}
		\snorm{ w - \calP_0 w }_j \leq C h^{1-j} \snorm{w}_1, \ \  j=0, 1, \quad \forall w \in H^1(\Omega).
	\end{equation*}
\end{proposition}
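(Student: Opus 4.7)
The plan is to construct $\Pih$ in two steps: a standard continuous quasi-interpolant into $\Ch$, followed by a local discontinuous correction in $\Done$ chosen to enforce the commutativity \eqref{eqn:comm} by construction. Specifically, set $\Pih \bv := \Pihz \bv + \Pihone \bv$, where $\Pihz : [H^1(\Omega)]^d \to \Ch$ is a Scott--Zhang quasi-interpolant into the conforming piecewise linear space, and $\Pihone \bv \in \Done$ has the elementwise form $\Pihone \bv|_\K = c_\K(\bx - \bx_\K)$, with $c_\K\in\mathbb{R}$ to be chosen on each element.

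The key observation is that for any $\psi \in \Done$ one has $\divv \psi|_\K = d \cdot c_\K$, a constant on $\K$. Hence I define
\[
c_\K := \tfrac{1}{d} \bigl( \mathcal{P}_0(\divv \bv)|_\K - \divv(\Pihz \bv)|_\K \bigr),
\]
which immediately yields $\divv(\Pih \bv) = \mathcal{P}_0(\divv \bv)$ on every $\K$; this is exactly \eqref{eqn:comm}, and \eqref{eqn:div_pres} follows since $\mathcal{P}_0$ preserves integral means on each element. The $j = 0$ case of \eqref{eqn:div_err} is then the standard $L^2$-projection estimate applied to $\divv \bv$, while the $j = 1$ case is trivial because $\divv(\Pih \bv)$ is piecewise constant.

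The main remaining work is the approximation bound \eqref{eqn:Pih_bd}. By the triangle inequality, $|\bv - \Pih \bv|_{j,\K} \leq |\bv - \Pihz \bv|_{j,\K} + |\Pihone \bv|_{j,\K}$, and the first term is controlled by the classical Scott--Zhang estimate. For the enrichment correction, I combine the elementwise bound
\[
d^2 c_\K^2 |\K| = \|\mathcal{P}_0[\divv(\bv - \Pihz \bv)]\|_{0,\K}^2 \leq \|\divv(\bv - \Pihz \bv)\|_{0,\K}^2 \leq C\, h_\K^{2(m-1)} |\bv|_{m,\omega_\K}^2
\]
with the elementary estimates $\|\bx - \bx_\K\|_{0,\K} \leq C h_\K |\K|^{1/2}$ and $|\bx - \bx_\K|_{1,\K} \leq C |\K|^{1/2}$ on a shape-regular simplex. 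This yields $|\Pihone \bv|_{j,\K} \leq C h_\K^{m-j} |\bv|_{m,\omega_\K}$ for $0 \leq j \leq m \leq 2$, and summation over $\K$ using the bounded overlap of the Scott--Zhang patches $\omega_\K$ gives \eqref{eqn:Pih_bd}.

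The main obstacle is confirming that the enrichment, though inherently only piecewise linear, still delivers the optimal $O(h^{m})$ order in $L^2$ when $m = 2$. This succeeds precisely because the coefficient $c_\K$ already carries an implicit factor of $h_\K / |\K|^{1/2}$ inherited from the $H^1$-error of $\Pihz$, which pairs with the extra $h_\K$ coming from the $L^2$-norm of $\bx - \bx_\K$ on $\K$ to recover the full power of $h$; without this matching of scales, the correction would spoil the $j=0$ rate.
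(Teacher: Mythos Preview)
Your construction is correct and is precisely the natural one: Scott--Zhang into $\Ch$ followed by an elementwise correction in $\Done$ to fix the divergence means. The verification of \eqref{eqn:comm}--\eqref{eqn:div_pres} is immediate from your choice of $c_\K$ (noting that $\divv(\Pihz\bv)$ is already piecewise constant, so $\mathcal{P}_0[\divv(\bv-\Pihz\bv)] = \mathcal{P}_0(\divv\bv) - \divv(\Pihz\bv)$ as you use), and the scaling argument for $|\Pihone\bv|_{j,\K}$ is clean and correct.

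Note, however, that the paper does not actually prove this proposition: it merely states the properties and cites \cite{YiLeeZikatanov21} for the construction. Your argument is essentially what that reference contains, so there is nothing to compare against within this paper itself. One small remark: the range $0\le j\le m\le 2$ formally includes $m=0$, for which your bound on $c_\K$ does not directly apply (it relies on the $H^1$-approximation of Scott--Zhang). This edge case is of no practical consequence here---the proposition is only ever invoked with $m\ge 1$---but strictly speaking the $m=0$ stability would need a separate (and standard) argument for the $L^2$-boundedness of the Scott--Zhang operator.
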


\begin{lemma}
	For any $ \bv \in \Honedzd$,
	\begin{subequations}\label{cor:enorm_bd}
		\begin{align}
			& \enorm{ \bv -  \Pih  \bv} \leq \sqrt{2 \mu}  C \snorm{ \bv}_1, \label{eqn:enorm_bd1} \\
			& \enorm{ \Pih  \bv} \leq \sqrt{2 \mu}  C^* \snorm{ \bv}_1. \label{eqn:enorm_bd2}
		\end{align}
	\end{subequations}
\end{lemma}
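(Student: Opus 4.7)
The plan is to exploit two facts about any $\bv \in \Honedzd$: first, $\bv$ is continuous across every $e \in \Eho$ and vanishes on $\Gammad$, so $\jump{\bv}=0$ on every $e \in \Eho \cup \Ehbd$; second, because $\jump{\bv}=0$ everywhere relevant, the energy norm collapses to $\enorm{\bv}^2 = 2\mu \sumK \|\epsilon(\bv)\|_{0,\K}^2 \le 2\mu |\bv|_1^2$. Once \eqref{eqn:enorm_bd1} is in hand, the bound \eqref{eqn:enorm_bd2} follows immediately from the triangle inequality
\begin{equation*}
\enorm{\Pih \bv} \le \enorm{\bv - \Pih \bv} + \enorm{\bv},
\end{equation*}
so the entire work concentrates on proving \eqref{eqn:enorm_bd1}.

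For \eqref{eqn:enorm_bd1}, expand
\begin{equation*}
\enorm{\bv - \Pih \bv}^2 = 2\mu \sumK \|\epsilon(\bv - \Pih \bv)\|_{0,\K}^2 + 2\mu \sumEod \frac{\alpha}{h_e} \|\jump{\bv - \Pih \bv}\|_{0,e}^2,
\end{equation*}
and bound the two contributions separately. For the bulk term I would use $\|\epsilon(\cdot)\|_{0,\K} \le \|\nabla \cdot \|_{0,\K}$ and invoke a first-order local $H^1$-approximation bound for $\Pih$ of Scott--Zhang type, namely $\|\nabla(\bv - \Pih \bv)\|_{0,\K} \le C |\bv|_{1, \omega_\K}$ on a small patch $\omega_\K$ around $\K$. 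Summing over $\K \in \Th$ and using the bounded overlap of patches gives $\sumK \|\epsilon(\bv - \Pih \bv)\|_{0,\K}^2 \le C |\bv|_1^2$.

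For the jump term, since $\jump{\bv}=0$ we have $\jump{\bv - \Pih \bv} = -\jump{\Pih \bv}$, and it suffices to control $\sumEod h_e^{-1}\|\jump{\bv - \Pih \bv}\|_{0,e}^2$. I would apply the trace inequality \eqref{eqn:trace1} face-by-face to $\bv - \Pih \bv$ on each of the (at most two) elements sharing $e$, combine with the local $L^2$ estimate $\|\bv - \Pih \bv\|_{0,\K} \le C h_\K |\bv|_{1,\omega_\K}$, and divide by $h_e$. The powers of $h_\K$ balance, and summation over edges yields $\sumEod h_e^{-1}\|\jump{\bv - \Pih \bv}\|_{0,e}^2 \le C |\bv|_1^2$. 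Combining the two contributions produces \eqref{eqn:enorm_bd1} with constant of the form $\sqrt{2\mu}\,C$, and then \eqref{eqn:enorm_bd2} follows as indicated above with $C^* = C + 1$.

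The principal obstacle is that the Proposition explicitly states approximation properties of $\Pih$ only for $\bv \in [H^2(\Omega)]^d$, whereas the argument above requires first-order local $H^1$ approximation bounds valid for $\bv \in \Honed$. I would justify these by invoking the construction of $\Pih$ from \cite{YiLeeZikatanov21} as the sum of a Scott--Zhang-type continuous interpolant onto $\Ch$, which enjoys patchwise $H^1$-stability and first-order approximation by standard arguments, and a local enrichment into $\Done$ that only corrects the element-wise divergence via \eqref{eqn:div_pres}--\eqref{eqn:comm}. The $\Done$-correction is bounded by a scaling argument on a single simplex, so it contributes only a local, mesh-independent constant and does not spoil the patchwise $H^1$ estimates needed above.
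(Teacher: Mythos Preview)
Your argument matches the paper's: expand $\enorm{\bv - \Pih \bv}^2$, bound the bulk and jump pieces via the approximation estimate \eqref{eqn:Pih_bd} together with the trace inequality \eqref{eqn:trace1}, and then obtain \eqref{eqn:enorm_bd2} from \eqref{eqn:enorm_bd1} by the triangle inequality after noting that $\enorm{\bv} = \sqrt{2\mu}\,\|\epsilon(\bv)\|_0$ for $\bv \in \Honedzd$. Your observation that \eqref{eqn:Pih_bd} is formally stated only for $[H^2(\Omega)]^d$ is a fair point the paper's terse proof leaves implicit; the paper simply invokes \eqref{eqn:Pih_bd} (with $m=1$) without comment, whereas your appeal to the underlying Scott--Zhang-type construction in \cite{YiLeeZikatanov21} makes the required $H^1$-stability explicit.
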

\begin{proof}
	The first inequality, \eqref{eqn:enorm_bd1}, is a direct consequence of \eqref{eqn:Pih_bd} and \eqref{eqn:trace1}:
	\begin{align*}
		\enorm{ \bv -  \Pih  \bv}^2 & = 2 \mu \left[\sumK \norm{\epsilon( \bv -  \Pih  \bv)}_{0, \K}^2 
		+ \sumEod \frac{\alpha}{h_e} \norm{ \jump{ \bv -  \Pih  \bv}}_{0, e}^2 \right].
	\end{align*}
To prove \eqref{eqn:enorm_bd2},  we first recall that  $\jump{ \bv }= 0$ on every  $e \in \Eho \cup \Ehbd$. Therefore, 
	\[
	\enorm{ \bv} = \sqrt{2 \mu} \norm{\epsilon (\bv)}_0, \quad \forall  \bv \in \Honedzd.
	\]
	 Then,  the bound in~\eqref{eqn:enorm_bd1} and the triangle inequality give the desired result. 
	
\end{proof}
Using the result above, we now prove the inf-sup condition.
\begin{lemma} \label{lem:inf-sup}
	For a sufficiently large $\alpha$, there exists a constant $\beta >0$, independent of $h$ and $\mu$, such that 
	\begin{equation}\label{eqn: infsup}
		\inf_{q \in \cWh}\sup_{\bv \in \cVh} \frac{\bbh(\bv, q)}{\enorm{\bv}\norm{q}_0} \ge \frac{\beta}{\sqrt{2 \mu}}.
	\end{equation}
\end{lemma}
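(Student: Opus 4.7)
The plan is a Fortin-type argument tailored to the fact that the pressure space is piecewise constant and the EG interpolant $\Pih$ satisfies the commutativity property \eqref{eqn:comm}. Given $q \in \cWh$, I would first invoke the classical continuous inf-sup condition for Stokes on $\Honedzd$ (paired with $L^2(\Omega)$ or $\Ltwoz$ according to whether $\Gamman$ is empty) to obtain $\bv \in \Honedzd$ with $-\divv \bv = q$ and $|\bv|_1 \le C_\Omega \|q\|_0$, where $C_\Omega$ depends only on $\Omega$ and $\Gammad$. The Fortin candidate in $\cVh$ is then $\Pih \bv$.

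The crucial computation is that, although $\Pih \bv$ is generally nonconforming, the commutativity \eqref{eqn:comm} combined with the fact that $q \in \cWh$ is piecewise constant yields
\begin{equation*}
\sumK (q, \divv \Pih \bv)_\K \;=\; \sumK (q, \mathcal{P}_0 \divv \bv)_\K \;=\; (q, \divv \bv) \;=\; -\|q\|_0^2.
\end{equation*}
Substituting into the definition of $\bbh$ produces the clean identity
\begin{equation*}
\bbh(\Pih \bv, q) \;=\; \|q\|_0^2 \;+\; R, \qquad R \;:=\; \sumEod (\av{q}, \jump{\Pih \bv}\cdot \bne)_e,
\end{equation*}
so the entire proof reduces to showing that $R$ can be absorbed into $\tfrac12 \|q\|_0^2$ when $\alpha$ is sufficiently large.

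To bound $R$, I apply Cauchy-Schwarz with the natural $\alpha/h_e$ weighting,
\begin{equation*}
|R| \;\le\; \Bigl(\sumEod \tfrac{h_e}{\alpha}\|\av{q}\|_{0,e}^2\Bigr)^{1/2} \Bigl(\sumEod \tfrac{\alpha}{h_e}\|\jump{\Pih\bv}\|_{0,e}^2\Bigr)^{1/2}.
\end{equation*}
The first factor is at most $C \alpha^{-1/2}\|q\|_0$ by the piecewise-constant scaling $h_e \|q\|_{0,e}^2 \lesssim \|q\|_{0,\K}^2$ on a shape-regular simplicial mesh. The second factor is at most $(2\mu)^{-1/2} \enorm{\Pih \bv}$, and \eqref{eqn:enorm_bd2} together with the continuous bound on $\bv$ gives $\enorm{\Pih \bv} \le \sqrt{2\mu}\, C^* C_\Omega \|q\|_0$. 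Collecting these estimates, $|R| \le \widetilde C \alpha^{-1/2}\|q\|_0^2$, and choosing $\alpha$ large enough that $\widetilde C \alpha^{-1/2} \le \tfrac12$ yields $\bbh(\Pih\bv, q) \ge \tfrac12 \|q\|_0^2$. Dividing by $\enorm{\Pih\bv}\,\|q\|_0$ delivers \eqref{eqn: infsup} with $\beta$ independent of $h$ and $\mu$.

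The main obstacle is the jump residual $R$, which appears precisely because the EG enrichment in $\Done$ makes $\Pih \bv$ nonconforming; the usual ``Fortin means $b_h(\Pih \bv,q)=-(\divv \bv, q)$'' identity fails in the presence of interior jumps. What rescues the argument is that the EG space and the interpolant $\Pih$ were constructed around the commutativity~\eqref{eqn:comm}: this localizes the only uncontrolled contribution onto edges, where the penalty piece of $\enorm{\cdot}$ measures exactly the correct norm. This is the same structural feature exploited in Lemma~\ref{lem: u-coer}, and taking $\alpha$ large is the standard device that turns the edge residual into a lower-order perturbation.
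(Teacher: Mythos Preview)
Your proposal is correct and follows essentially the same Fortin-type argument as the paper: invoke the continuous inf-sup to produce $\bv\in\Honedzd$ with $\divv\bv=-q$, take $\Pih\bv$ as the discrete candidate, use the commutativity/divergence-preservation of $\Pih$ to reduce the volume term to $\|q\|_0^2$, and control the edge-jump residual via the weighted Cauchy--Schwarz and trace inequalities together with \eqref{eqn:enorm_bd2}, absorbing it for large $\alpha$. The only cosmetic difference is that the paper carries $\enorm{\Pih\bv}$ in the denominator throughout and arrives at $\beta=\tfrac{1}{C^*C^{**}}-\tfrac{\sqrt{C}}{\sqrt{\alpha}}$, whereas you first secure $\bbh(\Pih\bv,q)\ge\tfrac12\|q\|_0^2$ and then divide.
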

\begin{proof}
	For any $q \in \cWh \subseteq \Ltwo$, there exists a vector $\bv \in \Honedzd$ and a constant $C^{**} > 0$ such that \cite{GopalQiu12,ern2021finite}
	\begin{equation}\label{eqn: divbv}
		\divv \bv = - q \quad \mbox{and} \quad \snorm{\bv}_1 \leq C^{**} \norm{q}_0. 
	\end{equation}
	Then, using \eqref{eqn:div_pres}, \eqref{eqn:enorm_bd2}, \eqref{eqn: divbv}, followed by the trace inequality, we have
	\begin{align*}
		\frac{\bbh(\Pih \bv, q)}{\enorm{\Pih \bv}} & = \frac{- \sumK(q, \divv \Pih \bv)_\K}{\enorm{\Pih \bv}} + \frac{\sumEod (\av{q}, \jump{\Pih \bv}\cdot \bne)_e}{\enorm{\Pih \bv}} \\
		& = \frac{- \sumK(q, \divv  \bv)_\K}{\enorm{\Pih \bv}} + \frac{\sumEod (\av{q} \bne, \jump{\Pih \bv})_e}{\enorm{\Pih \bv}}  \\
		&  \ge \frac{\norm{q}_0^2}{\sqrt{2 \mu} C^* C^{**} \norm{q}_0} - \frac{\sumEod \norm{\av{q} \bne}_{0, e} \norm{ \jump{\Pih \bv}}_{0, e}}{\enorm{\Pih \bv}} \\
		&  \ge \frac{\norm{q}_0^2}{\sqrt{2 \mu} C^*  C^{**}\norm{q}_0} - \frac{1}{\enorm{\Pih \bv}} \left( \frac{1}{2 \mu}  \sumEod \frac{h_e}{\alpha} \norm{\av{q} \bne}_{0, e}^2 \right)^\half \left( 2 \mu  \sumEod  \frac{\alpha}{h_e}  \norm{ \jump{\Pih \bv}}_{0, e}^2\right)^\half \\
		&  \ge \frac{\norm{q}_0^2}{\sqrt{2 \mu} C^*  C^{**}\norm{q}_0} -  \frac{\sqrt{C}}{\sqrt{2 \mu} \sqrt{\alpha}} \frac{\norm{q}_0 \enorm{\Pih \bv}}{\enorm{\Pih \bv}} \\
		& = \frac{1}{\sqrt{2 \mu}} \left( \frac{1}{C^* C^{**}} - \frac{\sqrt{C}}{\sqrt{\alpha}}\right) \norm{q}_0,
	\end{align*}
       where $C$ depends on $\widetilde{C}_t$ from~\eqref{eqn:trace2}.
	If $\alpha$ is sufficiently large, then $( \frac{1}{C^* C^{**}} - \frac{\sqrt{C}}{\sqrt{\alpha}})$ is positive, and the inf-sup condition \eqref{eqn: infsup} holds true with $\beta = ( \frac{1}{C^* C^{**}} - \frac{\sqrt{C}}{\sqrt{\alpha}})$.
\end{proof}

In the next lemma, we show the continuity of the bilinear form $\bbh(\cdot, \cdot)$.
\begin{lemma}\label{lem:b-conti}
	For any $\bv \in \cV_h$ and $q \in \cWh$, for sufficiently large $\alpha$, the bilinear form $\bbh(\cdot, \cdot)$ is continuous:
	\begin{equation*}%\label{ine:b-conti}
		|\bbh(\bv, q)| \leq \frac{\sqrt{d}}{\sqrt{2 \mu}} \|q \|_0 \enorm{\bv}. 
	\end{equation*}
\end{lemma}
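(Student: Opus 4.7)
The plan is to apply the triangle inequality to split $\bbh(\bv,q)$ into its two defining pieces, then bound each piece separately using Cauchy--Schwarz together with (i) the pointwise identity relating $\divv\bv$ to $\epsilon(\bv)$, and (ii) the trace inequality \eqref{eqn:trace2} for the piecewise-constant pressure and jump terms, much as was done in the proofs of Lemmas~\ref{lem: u-coer} and~\ref{lem:u-conti}.

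For the volumetric term $-\sumK(q,\divv\bv)_\K$, I would use $\divv\bv = \operatorname{tr}(\nabla\bv) = \operatorname{tr}(\epsilon(\bv))$ together with the elementary estimate $|\operatorname{tr}(A)|^2 \leq d\,|A|^2$ for any symmetric $d\times d$ tensor $A$. Integrating yields $\|\divv\bv\|_{0,\K}\leq \sqrt{d}\,\|\epsilon(\bv)\|_{0,\K}$ on each simplex, and an $L^2(\Omega)$ Cauchy--Schwarz then gives
\[
\Big|\sumK(q,\divv\bv)_\K\Big| \;\leq\; \sqrt{d}\,\|q\|_0\,\Big(\sumK \|\epsilon(\bv)\|_{0,\K}^2\Big)^{1/2} \;\leq\; \frac{\sqrt{d}}{\sqrt{2\mu}}\,\|q\|_0\,\enorm{\bv},
\]
which produces exactly the target constant.

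For the facet term $\sumEod(\av{q},\jump{\bv}\cdot\bne)_e$, I would introduce the weights $\sqrt{h_e/\alpha}$ and $\sqrt{\alpha/h_e}$ and apply Cauchy--Schwarz so that the jump factor is packaged into the energy norm $\enorm{\bv}/\sqrt{2\mu}$. For the remaining factor $\sumEod (h_e/\alpha)\|\av{q}\|_{0,e}^2$, I would apply the polynomial trace inequality \eqref{eqn:trace2} to $q|_{\K^\pm} \in \mathbb{P}_0(\K)$; combined with shape regularity (which makes $h_e |e|/|\K|$ uniformly bounded), this yields a bound of the form $(C/\alpha)\,\|q\|_0^2$. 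Altogether the facet contribution is controlled by $\tfrac{\sqrt{C}}{\sqrt{\alpha}\sqrt{2\mu}}\,\|q\|_0\,\enorm{\bv}$, which becomes negligible for large $\alpha$.

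The main (mild) obstacle is bookkeeping the constants so the two contributions combine into the clean bound $\tfrac{\sqrt{d}}{\sqrt{2\mu}}\|q\|_0\enorm{\bv}$: the volume term already delivers $\sqrt{d}$ exactly, while the facet term must be absorbed using the assumption that $\alpha$ is sufficiently large, in the same spirit as the threshold conditions on $\alpha$ that appear in Lemmas~\ref{lem: u-coer}--\ref{lem:inf-sup}. Apart from this, the argument is a direct Cauchy--Schwarz/trace-inequality computation with no essential new ingredients.
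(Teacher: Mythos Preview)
Your proposal is correct and follows essentially the same route as the paper: split $\bbh(\bv,q)$ into the volume and facet contributions, bound the volume term via $\|\divv\bv\|_{0,\K}\le\sqrt{d}\,\|\epsilon(\bv)\|_{0,\K}$ and Cauchy--Schwarz, bound the facet term via the weighted Cauchy--Schwarz and the trace inequality~\eqref{eqn:trace2} to get a factor $\sqrt{C/\alpha}$, and then take $\alpha$ large. The only cosmetic difference is that the paper keeps the $\epsilon$-part and the jump-part of $\enorm{\bv}$ separate until the final line before combining, whereas you bound each piece by the full energy norm immediately; both arrive at the same conclusion under the ``$\alpha$ sufficiently large'' hypothesis.
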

\begin{proof}
	By the Cauchy-Schwarz inequality,
	\begin{align*}
          \left|\sum_{\K \in \Th}(q, \divv \bv)_T\right|^2
          & \leq 
          \left(\sum_{\K \in \Th}\|q\|_{0,\K}^2\right)
            \left( \sum_{\K \in \Th} \| \divv \bv \|^2_{0, \K} \right)
            = 
          \|q\|^2_{0}\sum_{\K \in \Th} \| \divv \bv \|^2_{0, \K}.
        \end{align*}
	Using this bound,
	\begin{eqnarray*}
          |\bbh(\bv, q)|
          &=&  \left|- \sum_{\K \in \Th}(q, \divv \bv) + \sumEod (\av{q}, \jump{\bv} \cdot \bne)_e\right| \\
          & \leq & \frac{1}{\sqrt{2\mu}} \| q \|_0 \, \sqrt{2\mu} \left( \sum_{\K \in \Th} \| \divv \bv \|^2_{0, \K} \right)^{\half} \\
          && + \left( \frac{1}{2\mu}\sumEod \frac{h_e}{\alpha} \|\av{q} \bne \|_{0, e}^2 \right)^{\half} \left( 2 \mu \sumEod \frac{\alpha}{h_e} \| \jump{\bv} \|_{0,e}^2 \right)^{\half}.
	\end{eqnarray*}
Then, the trace inequality \eqref{eqn:trace2} leads to 
	\begin{align*}
          |\bbh(\bv, q)|
		& \leq \frac{1}{\sqrt{2\mu}} \| q \|_0 \,  \left( 2\mu \sum_{\K \in \Th} \| \divv \bv \|^2_{0, \K} \right)^{\half} + \frac{1}{\sqrt{2 \mu}}\frac{\sqrt{C}}{\sqrt{\alpha}} \|q \|_0 \left( 2 \mu \sumEod \frac{\alpha}{h_e} \| \jump{\bv} \|_{0,e}^2 \right)^{\half} \\
		& \leq \frac{1}{\sqrt{2\mu}} \| q \|_0 \,  \left( 2\mu \, d \sum_{\K \in \Th} \|\epsilon(\bv)  \|^2_{0, \K} \right)^{\half} + \frac{1}{\sqrt{2 \mu}}\frac{\sqrt{C}}{\sqrt{\alpha}} \|q \|_0 \left( 2 \mu \sumEod \frac{\alpha}{h_e} \| \jump{\bv} \|_{0,e}^2 \right)^{\half} \\
		&  \leq  \frac{\sqrt{d}}{\sqrt{2 \mu}} \| q \|_0 \enorm{\bv}
	\end{align*}
provided $\alpha > C$, which completes the proof. 
\end{proof}

\subsection{Babuska's Theory}
In the previous two subsections, we verified the assumptions on the bilinear forms $\bah$ and $\bbh$ via Brezzi's theory~\cite{1974Brezzi-a}, which naturally implies the well-posedness of the EG scheme~\eqref{eqn: eg}. In order to simplify the convergence analysis as well as develop the parameter-robust preconditioners, we also present the well-posedness following Babuska theory~\cite{1973Babuska-a}.
Let $\bm{x} = (\bu, p) \in \bXh$ and $\bm{y} = (\bv, q) \in \bXh$, where $\bXh = \mathcal{V}_h \times \mathcal{W}_h$ is fixed\footnote{To avoid unnecessary complications in the notation, in this subsection, the pair $(\bu,p)$ denotes a generic element from~$\bXh$ instead of the solution to~\eqref{stokes1}--\eqref{bdry-t}.}. We first define the following composite bilinear form corresponding to the EG scheme, \eqref{eqn: eg}. 
\begin{equation*}
	\mathcal{L}(\bm{x}, \bm{y}) := \bah(\bu,\bv) + \bbh(\bm{v}, p) + \bbh(\bu, q).
\end{equation*}
In addition, we also define a weighted norm on the space $\bXh$ as follows, for $\bm{x} \in \bXh$,
\begin{equation*}
	\| \bm{x} \|^2_{\bXh} := \enorm{\bu}^2 + \frac{1}{2 \mu} \| p \|^2_{0}.
\end{equation*}

%Based on Lemma~\ref{lem: u-coer} and~\ref{lem:inf-sup}, together with the boundness of the bilinear forms $\ba_{\theta}(\cdot, \cdot)$ and $\bb(\cdot, \cdot)$, 

In the following well-posedness result, the constants $\Gamma$ and $\gamma$ only depend on the spatial dimension $d$, the coercivity constant $\chi$ in \eqref{eqn: coer}, and the inf-sup constant $\beta$ in \eqref{eqn: infsup}. The estimates are robust with respect to other parameters in the problem including $\mu$ and $h$, which also implies that the EG scheme~\eqref{eqn: eg} is well-posed. 
	%This implies that $\mathcal{A}$ defined in~\eqref{eqn:linear-system} is an isomorphism from $\bXh $ to $\bXh '$ and the linear system~\eqref{eqn:linear-system} is well-posed. 

\begin{theorem} \label{thm:well-posedness}
	For a sufficiently large $\alpha$, the bilinear form $\mathcal{L}$ is continuous and satisfies the inf-sup condition. That is, there exist positive constants $\Upsilon$ and $\gamma$, independent of $h$, such that 
\begin{align} 
& |\mathcal{L}(\bm{x}, \bm{y})| \leq \Upsilon \| \bm{x} \|_{\bXh} \| \bm{y} \|_{\bXh}, \quad \forall \, \bm{x}, \bm{y} \in \bXh,  \label{ine:L-conti}\\
&\inf_{\bm{x} \in \bXh} \sup_{\bm{y} \in \bXh} \frac{|\mathcal{L}(\bm{x}, \bm{y})|}{\|\bm{x}\|_{\bXh} \| \bm{y} \|_{\bXh}} \geq \gamma. \label{ine:L-infsup}
	\end{align}
\end{theorem}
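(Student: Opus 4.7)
The plan is to verify both estimates directly from Lemmas~\ref{lem: u-coer}--\ref{lem:b-conti}, which already furnish all the coercivity, continuity, and inf-sup bounds needed at the component level. Because $\mathcal{L}$ is built linearly from $\bah$ and $\bbh$, the proof reduces to an orchestration argument combined with a Fortin-style trick for the saddle-point inf-sup condition.

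For the continuity estimate~\eqref{ine:L-conti}, I would simply split
\[
\mathcal{L}(\bm{x}, \bm{y}) = \bah(\bu, \bv) + \bbh(\bv, p) + \bbh(\bu, q)
\]
and estimate each term separately: Lemma~\ref{lem:u-conti} bounds the first piece by $2\enorm{\bu}\enorm{\bv}$, while Lemma~\ref{lem:b-conti} bounds each $\bbh$-piece by $\frac{\sqrt{d}}{\sqrt{2\mu}}\|\cdot\|_0\enorm{\cdot}$. Recognizing $\enorm{\bu}^2$ and $\tfrac{1}{2\mu}\|p\|_0^2$ as the components of $\|\bm{x}\|_{\bXh}^2$, an application of the discrete Cauchy--Schwarz inequality on the resulting sum of cross terms yields a constant $\Upsilon$ that depends only on $d$.

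For the inf-sup estimate~\eqref{ine:L-infsup} I would adopt the standard saddle-point construction. Given $\bm{x}=(\bu,p) \in \bXh$, Lemma~\ref{lem:inf-sup} supplies a Fortin surrogate $\bw_p \in \cVh$; I rescale it so that $\enorm{\bw_p} = \|p\|_0/\sqrt{2\mu}$, which forces $\bbh(\bw_p, p) \geq \frac{\beta}{2\mu}\|p\|_0^2$. Then I would test with $\bm{y} = (\bu + \delta \bw_p,\, -p)$ for a small $\delta>0$. The $\bbh(\bu, p)$ contributions cancel, leaving
\[
\mathcal{L}(\bm{x}, \bm{y}) = \bah(\bu,\bu) + \delta\,\bah(\bu, \bw_p) + \delta\,\bbh(\bw_p, p).
\]
Coercivity (Lemma~\ref{lem: u-coer}) controls the first term from below by $\chi\enorm{\bu}^2$; the Fortin bound controls the third by $\frac{\delta\beta}{2\mu}\|p\|_0^2$; and continuity (Lemma~\ref{lem:u-conti}) together with Young's inequality absorbs the mixed term into $\tfrac{\chi}{2}\enorm{\bu}^2 + C\delta^2\enorm{\bw_p}^2$. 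Choosing $\delta \sim \chi\beta$ gives $\mathcal{L}(\bm{x},\bm{y}) \gtrsim \|\bm{x}\|_{\bXh}^2$ with a constant depending only on $\chi$ and $\beta$. A direct triangle-inequality estimate also shows $\|\bm{y}\|_{\bXh} \lesssim \|\bm{x}\|_{\bXh}$, and the inf-sup constant $\gamma$ in~\eqref{ine:L-infsup} follows immediately.

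The main subtlety I anticipate is tracking the $\mu$-weighting consistently. The inf-sup constant in~\eqref{eqn: infsup} carries the factor $1/\sqrt{2\mu}$ and the composite norm $\|\cdot\|_{\bXh}$ weighs the pressure by $1/(2\mu)$; the Fortin rescaling $\enorm{\bw_p} = \|p\|_0/\sqrt{2\mu}$ is precisely what aligns these two weightings so that every coefficient in the final lower bound is free of $\mu$ (and of $h$). Beyond this bookkeeping, the argument is routine.
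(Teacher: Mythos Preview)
Your proposal is correct and follows essentially the same route as the paper: both split $\mathcal{L}$ into its three constituent pieces for continuity, and both build the inf-sup test function as $\bm{y}=(\bu+\delta\bw_p,-p)$ with a Fortin element scaled so that $\enorm{\bw_p}\sim\|p\|_0/\sqrt{2\mu}$, then balance the cross term $\bah(\bu,\bw_p)$ against coercivity. The only cosmetic difference is that the paper packages both the continuity bound and the absorption of the cross term into $2\times2$ matrix-norm/SPD arguments, whereas you invoke discrete Cauchy--Schwarz and Young's inequality directly; the resulting constants and dependence on $d,\chi,\beta$ are the same.
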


\begin{proof}
	For the continuity of $\mathcal{L}$, using Lemmas~\ref{lem:u-conti} and~\ref{lem:b-conti}, we have
	\begin{align*}
		|\mathcal{L}(\bm{x}, \bm{y})|
		&=| \bah(\bu,\bv) + \bbh(\bm{v}, p) + \bbh(\bu, q) | \\
		& \leq 2 \enorm{\bu} \enorm{\bv} + \frac{\sqrt{d}}{\sqrt{2\mu}} \| p \|_0 \enorm{\bv} + \frac{\sqrt{d}}{\sqrt{2\mu}} \| q \|_0 \enorm{\bu} \\
		& =
		\begin{pmatrix}
			\enorm{\bv} & \frac{1}{\sqrt{2\mu}}\|q\|_0   
		\end{pmatrix}
		\begin{pmatrix}
			2&\sqrt{d}\\
			\sqrt{d} & 0       
		\end{pmatrix}
		\begin{pmatrix}
			\enorm{\bu}\\ \frac{1}{\sqrt{2\mu}}\|p\|_0 
		\end{pmatrix}\\
		% & \leq
		% \left\|
		% \begin{pmatrix}
		% 	2&\sqrt{d}\\
		% 	\sqrt{d} & 0       
		% \end{pmatrix}
		% \right\|_{\ell^2} \| \bm{x} \|_{\bXh} \| \bm{y} \|_{\bXh} \\ %&=
		& \leq 
		\left(1+\sqrt{d+1}\right)
		\| \bm{x} \|_{\bXh} \| \bm{y} \|_{\bXh}.
	\end{align*}
	This implies the continuity of $\mathcal{L}$~\eqref{ine:L-conti} with $\Upsilon = \left(1+\sqrt{d+1}\right)$.
	
	To show the inf-sup condition, \eqref{ine:L-infsup}, for any given $\bm{x} = (\bu, p) \in \bXh$, we take $\bv = \bu + t \frac{\beta}{\sqrt{2 \mu}} \bw$ and $q = -p$, where $\bw$ is chosen based on the inf-sup condition~\eqref{eqn: infsup} such that $\bbh(\bw, p) \geq \frac{\beta}{\sqrt{2\mu}} \|p\|_0 \enorm{\bw}$. Since this inequality is invariant with respect to any scaling of $\bw$, we may choose $\bw$ such that $\enorm{\bw} = \|p\|_0$.  Then, using Lemma~\ref{lem:u-conti} and~\ref{lem:b-conti},
	\begin{align*}
		\mathcal{L}(\bm{x}, \bm{y}) &= \bah(\bu, \bu+t \frac{\beta}{\sqrt{2 \mu}} \bw) + \bbh(\bu + t \frac{\beta}{\sqrt{2 \mu}}\bw, p) - \bbh(\bu, p) \\
		& = \bah(\bu, \bu) + t \frac{\beta}{\sqrt{2 \mu}} \bah(\bu,\bw) + t \frac{\beta}{\sqrt{2 \mu}} \bbh(\bw, p) \\
		& \geq \chi \enorm{\bu}^2 - 2t \frac{\beta}{\sqrt{2 \mu}} \enorm{\bu} \enorm{\bw} + t\frac{\beta^2}{2\mu} \|p \|_0 \enorm{\bw} \\
		& = \chi \enorm{\bu}^2 - 2t \enorm{\bu} \left( \frac{\beta}{\sqrt{2 \mu}} \| p \|_0  \right)+ t  \left(\frac{\beta^2}{2\mu} \|p \|^2_0 \right) \\
		& = 
		\begin{pmatrix}
			\enorm{\bu} &
			\frac{\beta}{\sqrt{2 \mu}} \| p \|_0
		\end{pmatrix}		
		\begin{pmatrix}
			\chi & -t \\
			-t & t
		\end{pmatrix}
		\begin{pmatrix}
			\enorm{\bu} \\
			\frac{\beta}{\sqrt{2 \mu}} \| p \|_0
		\end{pmatrix}.
	\end{align*}
	If $0 < t < \chi$, the matrix is symmetric positive and definite and, therefore, there exists $\gamma_0$ such that
	\begin{equation*}
		\mathcal{L}(\bm{x}, \bm{y}) \geq \gamma_0 \left( \enorm{\bu}^2 + \frac{\beta^2}{2 \mu} \| p \|_0^2 \right) \geq \tilde{\gamma} \|\bm{x} \|_{\bXh}^2,
	\end{equation*}
	where $\tilde{\gamma} = \gamma_0 \min\{1, \beta^2\}$. In addition, it is straightforward to show that $\| \bm{y} \|_{\bXh} = \|(\bm{v}, q) \|_{\bXh} \leq \bar{\gamma} \| \bm{x} \|_{\bXh}$ for some constant $\bar{\gamma} >0$. Therefore, $\mathcal{L}$ satisfies~\eqref{ine:L-infsup} with $\gamma = \tilde{\gamma}/\bar{\gamma}$.
\end{proof}

%%%%%%%%%%%%%%%%%%%%%%%%%%%%%%%%%%%%%%%
\section{Convergence Analysis}\label{sec:convergence}
Now that we have established the well-posedness of the EG scheme, we next prove optimal error estimates for the velocity $\buh$ and the pressure $\ph$. To facilitate the analysis, we introduce the following variables:
\begin{alignat*}{2}
	& \etau = \bu - \Pih \bu, &&\quad \xiu = \Pih \bu - \buh, \\
	& \etap = p -   \calP_0 p, &&\quad \xip =  \calP_0 p - \ph.
\end{alignat*}
The interpolation error estimates for $\etau$ and $\etap$ are well-known, so we need only to prove the error estimates for the auxiliary variables, $\xiu$ and $\xip$. 
From the consistency equations, \eqref{eqn: consis}, we derive the following error equations:
\begin{subequations}\label{eqn: error0}
	\begin{alignat}{3}
		\bah(\xiu, \bv) +\bbh(\bv, \xip) &  = - \bah(\etau, \bv) - \bbh(\bv,\etap) , && \quad  \forall \bv \in  \cVh, \label{error0-1} \\
		\bbh(\xiu, q ) &  =  -\bbh(\etau, q ), && \quad \forall q \in \cWh. \label{error0-2}
	\end{alignat}
\end{subequations}
By summing~\eqref{error0-1} and~\eqref{error0-2}, we rewrite the error equations in terms of the composite bilinear form, $\mathcal{L}$, as follows: 
\begin{equation*}
	\mathcal{L}(\xi_{\bm{x}}, \by) = - \mathcal{L}(\eta_{\bm{x}}, \by), \quad \forall\, \by = (\bv, q) \in \bXh,
\end{equation*}
where $\xi_{\bm{x}} = (\xiu, \xip) \in \bXh$ and $\eta_{\bm{x}} = (\etau, \etap)$. Note that this also implies the consistency of $\mathcal{L}$. %Note that $\eta_{\bm{x}} \notin \bXh $ which means we cannot directly apply Brezzi or Babuska thoery to obtain the convergence results for the EG scheme. 

\begin{lemma}\label{lem:error-xi}
Assume that the exact solution, $(\bu, p)$, belongs to $\Htwod \times \Hone$. If $\alpha$ is large enough, we have
\begin{equation*}
\left(\enorm{\xiu}^2 + \frac{1}{2\mu} \| \xi_p \|^2_0 \right)^{\half} = \| \xi_{\bm{x}} \|_{\bXh}   \leq  C \gamma^{-1} h \left( \sqrt{\mu(d+1)}\| \bu \|_2 + \sqrt{\frac{d}{\mu}} \| p \|_1  \right),
\end{equation*}
where $C>0$ is independent of $\mu$ and $h$.
\end{lemma}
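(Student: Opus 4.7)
The plan is to use the Babuska-style inf-sup condition on $\mathcal{L}$ established in Theorem~\ref{thm:well-posedness}. First I would apply \eqref{ine:L-infsup} to $\xi_{\bm{x}} = (\xi_{\bu}, \xi_p)$ and then rewrite the supremum on the right side via the consistency identity $\mathcal{L}(\xi_{\bm{x}}, \by) = -\mathcal{L}(\eta_{\bm{x}}, \by)$ (which is derived by summing \eqref{error0-1} and \eqref{error0-2}). This gives
\begin{equation*}
\gamma\, \|\xi_{\bm{x}}\|_{\bXh} \;\leq\; \sup_{\by \in \bXh\setminus\{\bzero\}} \frac{|\mathcal{L}(\eta_{\bm{x}}, \by)|}{\|\by\|_{\bXh}},
\end{equation*}
so the whole proof reduces to bounding $|\mathcal{L}(\eta_{\bm{x}}, \by)|$ by the desired right-hand side times $\|\by\|_{\bXh}$.

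Next I would split $\mathcal{L}(\eta_{\bm{x}}, \by) = \bah(\etau,\bv) + \bbh(\bv, \etap) + \bbh(\etau, q)$ and estimate each term separately. For $\bbh(\bv,\etap)$, the volumetric term $-\sum_{\K}(\etap, \divv \bv)_\K$ vanishes because $\divv\bv|_\K \in \mathbb{P}_0(\K)$ while $\etap = p - \calP_0 p$ is $L^2$-orthogonal to constants on each element; only the edge term $\sum_e(\av{\etap}, \jump{\bv}\cdot\bne)_e$ remains, which I would control via the trace inequality \eqref{eqn:trace1} and the standard estimate $\|\etap\|_{0,\K} \leq C h\,|p|_{1,\K}$, ultimately bounded by $\frac{C\sqrt{d}\,h}{\sqrt{2\mu}} |p|_1 \cdot \enorm{\bv}$. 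For $\bbh(\etau, q)$, the element term vanishes by \eqref{eqn:div_pres}, and the edge contribution is handled using $\jump{\bu}=0$ on interior/Dirichlet edges (so $\jump{\etau} = -\jump{\Pih \bu}$) together with the trace inequality and \eqref{eqn:Pih_bd}, yielding a bound of order $h|\bu|_2 \cdot \|q\|_0/\sqrt{2\mu}$.

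The main obstacle is the term $\bah(\etau, \bv)$: the continuity estimate of Lemma~\ref{lem:u-conti} was proved for arguments in $\cV_h$, whereas $\etau \in [H^2(\Omega)]^d + \cV_h$ carries an extra boundary contribution $\sum_e \frac{h_e}{\alpha}\|\av{\epsilon(\etau)}\bne\|_{0,e}^2$ that is not controlled by the energy norm alone. I would introduce the augmented seminorm
\begin{equation*}
\enormp{\bw}^2 := \enorm{\bw}^2 + 2\mu\sum_{e \in \Eho\cup\Ehbd} \frac{h_e}{\alpha}\|\av{\epsilon(\bw)}\bne\|_{0,e}^2,
\end{equation*}
rerun the Cauchy-Schwarz steps of Lemma~\ref{lem:u-conti} to obtain $|\bah(\etau,\bv)| \leq C\,\enormp{\etau}\,\enorm{\bv}$, and then invoke the trace inequality \eqref{eqn:trace1} on $\epsilon(\etau)$ together with the interpolation bound \eqref{eqn:Pih_bd} with $m=2$ to conclude $\enormp{\etau} \leq C\sqrt{2\mu}\,h\,|\bu|_2$. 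This is the one place where the higher regularity $\bu \in \Htwod$ is genuinely needed.

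Finally I would assemble the three pieces, using $\|\by\|_{\bXh}^2 = \enorm{\bv}^2 + \|q\|_0^2/(2\mu)$ and the elementary inequality $ab + cd \leq \sqrt{a^2+c^2}\sqrt{b^2+d^2}$ to collect the factors of $\enorm{\bv}$ and $\|q\|_0/\sqrt{2\mu}$ under a single $\|\by\|_{\bXh}$. The resulting bound is exactly $Ch\bigl(\sqrt{\mu(d+1)}\|\bu\|_2 + \sqrt{d/\mu}\,|p|_1\bigr)$ after dividing by $\gamma$, which proves the claim; equality of $\|\xi_{\bm{x}}\|_{\bXh}$ with the square root on the left side of the inequality is just the definition of the weighted norm.
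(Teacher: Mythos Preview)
Your proposal is correct and follows the same strategy as the paper: apply the Babuska inf-sup condition \eqref{ine:L-infsup}, use consistency to replace $\xi_{\bm{x}}$ by $\eta_{\bm{x}}$, and then bound each of $\bah(\etau,\bv)$, $\bbh(\bv,\etap)$, $\bbh(\etau,q)$ separately via trace inequalities and interpolation estimates. Your observation that the elementwise volume contributions in $\bbh(\bv,\etap)$ and $\bbh(\etau,q)$ vanish (by orthogonality of $\etap$ to piecewise constants and by \eqref{eqn:div_pres}, respectively) is a small but nice simplification the paper does not exploit---it bounds those terms directly---but otherwise the arguments coincide.
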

\begin{proof}
By~\eqref{ine:L-infsup}, we have
\begin{align}
\left( \enorm{\xiu}^2 + \frac{1}{2\mu} \| \xi_p \|^2_0 \right)^{\frac{1}{2}} = \| \xi_{\bm{x}} \|_{\bXh} \leq \gamma^{-1} \sup_{\by \in \bXh} \frac{|\mathcal{L}(\xi_{\bm{x}}, \by)|}{\| \by \|_{\bXh}} = \gamma^{-1} \sup_{\by \in \bXh} \frac{|\mathcal{L}(\eta_{\bm{x}}, \by)|}{\| \by \|_{\bXh}}. \label{ine:error-bound}
\end{align}
Note that 
\begin{equation*}
|\mathcal{L}(\eta_{\bm{x}}, \by)| \leq |\bah(\eta_{\bu}, \bv)| + | b(\bv, \eta_p)|  + | b(\eta_{\bu}, q) |.
\end{equation*} 
		
To bound the first term, $|\bah(\eta_{\bu}, \bv)|$, we follow the same steps shown in the proof of Lemma~\ref{lem:u-conti} and use the trace inequality \eqref{eqn:trace1}:
\begin{align*}
|\bah(\eta_{\bu}, \bv)| &\leq 2 \mu\left ( \sumK \norm{\epsilon(\eta_{\bu})}_{0,T}^2 + \sumEod \frac{h_e}{\alpha} \| \av{\epsilon(\eta_{\bu})}\bne \|_{0,e}^2 + \left(1+|\theta|\right) \sumEod \frac{\alpha}{h_e} \norm{\jump{\eta_{\bu}}}_{0, e}^2  \right )^{\half} \cdot \\
& \quad \quad \   \left ( \sumK \norm{\epsilon(\bv)}_{0,T}^2 + |\theta|\sumEod \frac{h_e}{\alpha} \| \av{\epsilon(\bv)}\bne \|_{0, e}^2 + 2 \sumEod \frac{\alpha}{h_e} \norm{\jump{\bv}}_{0, e}^2 \right )^{\half}\\
& \leq 2\mu \left (\sumK \norm{\eta_{\bu}}_{1,\K}^2  + \frac{C}{\alpha} \sumK \left( \norm{\eta_{\bu}}^2_{1,\K} + h_{\K}^2 \norm{\eta_{\bu}}^2_{2,\K}  \right)  + C \alpha \sumK  \frac{1}{h_T^2} \left( \norm{\eta_{\bu}}^2_{0,T} + h_T^2 \| \eta_{\bu} \|^2_{1,\K} \right)  \right )^{\half} \cdot \\
& \quad \quad \ \left ( \left(1 + \frac{C}{\alpha}\right) \sum_{\K \in \Th} \norm{\epsilon(\bv)}_{0, \K}^2  +  2 \sumEod \frac{\alpha}{h_e} \norm{\jump{\bv}}_{0, e}^2 \right)^{1/2} \\
& \leq C h   \sqrt{\mu} \norm{\bu}_2  \enorm{\bv}.
\end{align*}
For the second term, mimicking the steps in the proof of Lemma~\ref{lem:b-conti}, we have
\begin{align*}
& |\bbh(\bv, \eta_p)| \\
& \leq \frac{1}{\sqrt{2\mu}} \| \eta_p \|_0 \, \sqrt{2\mu} \left( \sum_{\K \in \Th} \| \divv \bv \|^2_{0, \K} \right)^{\half} + \left( \frac{1}{2\mu}\sumEod \frac{h_e}{\alpha} \|\av{\eta_p} \bne \|_{0,e}^2 \right)^{\half} \left( 2 \mu \sumEod \frac{\alpha}{h_e} \| \jump{\bv} \|_{0,e}^2 \right)^{\half}  \\
& \leq \frac{1}{\sqrt{2\mu}} \norm{\eta_p}_0 \left( 2\mu \, d  \sumK \norm{\epsilon(\bv)}_{0,T}^2  \right)^{\half} + \frac{1}{\sqrt{2 \mu}} \frac{\sqrt{C}}{\sqrt{\alpha}} \left( \sumK (\norm{\eta_p}_{0,T}^2 + h_T^2 \norm{\eta_p}_{1,T}) \right)^{\half} \left( 2 \mu \sumEod \frac{\alpha}{h_e} \norm{\jump{\bv}}^2_{0,e} \right)^{\half} \\
& \leq C h \left(\frac{\sqrt{d}}{\sqrt{\mu}}  \norm{p}_1 \right) \enorm{\bv}.
\end{align*}
Similarly, we obtain the following estimate for the third term,
\begin{align*}
|\bbh(\eta_{\bu}, q)| &\leq C h \left( \sqrt{\mu d} \norm{\bu}_2 \right)\left( \frac{1}{\sqrt{\mu}} \| q\|_0 \right).
\end{align*}
Therefore, 
\begin{align*}
|\mathcal{L}(\eta_{\bm{x}}, \by)| & \leq 	Ch \left[  \left( \sqrt{\mu} \norm{\bu}_2 \right) \enorm{\bv} +  \left( \frac{\sqrt{d}}{\sqrt{\mu}} \|p \|_1 \right) \enorm{\bv} +  \left( \sqrt{\mu d} \norm{\bu}_2\right) \left( \frac{1}{\sqrt{\mu}}  \| q\|_0 \right) \right] \\
&\leq  Ch \left( \mu \| \bu \|_2^2 + \frac{d}{\mu} \|p\|_1^2 + \mu d \|\bu \|_2^2 \right)^{\half} \left( \enorm{\bv}^2 + \enorm{\bv}^2 + \frac{1}{\mu} \|q\|_0^2 \right)^{\half} \\
& \leq Ch \left( \mu (d+1) \| \bu \|_2^2 + \frac{d}{\mu} \| p \|_1^2  \right)^{\half} \norm{\by}_{\bXh}.
\end{align*}
Substituting the above inequality back into~\eqref{ine:error-bound} completes the proof.
\end{proof}

Now, based on the interpolation error estimates for $\eta_{\bu}$ and $\eta_p$, we immediately have the following convergence results of the EG scheme, \eqref{eqn: eg}.

\begin{theorem}
Let $(\bu, p)$ be the solution to \eqref{eqn: stokes} and $(\buh,\ph)$ be the solution to the EG scheme in \eqref{eqn: eg}.
If we assume that $(\bu, p)$ belongs to $\Htwod \times \Hone$, then we have the following error estimates:
	% \begin{align}
	%\enorm{\bu - \buh} \leq C h (\norm{\bu}_2 + \norm{p}_1), \label{err-u}\\
	%\norm{p - \ph}_0 \leq C h  (\norm{\bu}_2 + \norm{p}_1), \label{err-p}
	%\end{align}
\begin{align}
\enorm{\bu - \buh} &\leq C h \left(  \sqrt{\mu(d+1)} \norm{\bu}_2 + \sqrt{\frac{d}{\mu}}\norm{p}_1\right), \label{err-u}\\
\norm{p - \ph}_0 &\leq C h  \left( \mu \sqrt{(d+1)} \norm{\bu}_2 + (1 + \sqrt{d}) \norm{p}_1\right), \label{err-p}
\end{align}
where $C >0$ is independent of $h$ and $\mu$.
\end{theorem}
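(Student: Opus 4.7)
The natural plan is the classical splitting
$\bu - \buh = \etau + \xiu$ and $p - \ph = \etap + \xip$, followed by the triangle inequality in the appropriate norms, treating the interpolation pieces $\etau,\etap$ with the approximation properties of $\Pih$ and $\calP_0$, and the discrete error pieces $\xiu,\xip$ with Lemma~\ref{lem:error-xi}. This is the standard two-part strategy used in the proof of Lemma~\ref{lem:error-xi} itself, so most of the heavy lifting is already done.

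For the velocity estimate, I would first bound $\enorm{\etau}$. By definition,
$\enorm{\etau}^2 = 2\mu \bigl(\sumK \|\epsilon(\etau)\|_{0,\K}^2 + \sumEod \tfrac{\alpha}{h_e}\|\jump{\etau}\|_{0,e}^2\bigr)$. The volume term is controlled by $Ch\snorm{\bu}_2$ via \eqref{eqn:Pih_bd} with $j=1$, $m=2$. For the jump term I would apply the trace inequality \eqref{eqn:trace1} on each $e\subset\partial\K$ to $\etau\in H^2(\K)$, using $\frac{1}{h_e}\approx\frac{|e|}{|\K|}$, and then again \eqref{eqn:Pih_bd} with $(j,m)=(0,2),(1,2)$ to get $Ch\snorm{\bu}_2$. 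This yields $\enorm{\etau}\le C h\sqrt{\mu}\,\|\bu\|_2$. Combining with the $\xiu$ bound from Lemma~\ref{lem:error-xi} through the triangle inequality produces \eqref{err-u}, since $\sqrt{\mu}\le \sqrt{\mu(d+1)}$ can be absorbed into the leading constant.

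For the pressure estimate, the interpolation error is immediate: the local $L^2$-projection satisfies $\|\etap\|_0\le Ch\snorm{p}_1$, as stated right after \eqref{eqn:comm}. For $\xip$, the weighted-norm bound from Lemma~\ref{lem:error-xi} reads
$\tfrac{1}{\sqrt{2\mu}}\|\xip\|_0 \le \|\xi_{\bm{x}}\|_{\bXh}\le C\gamma^{-1}h\bigl(\sqrt{\mu(d+1)}\|\bu\|_2+\sqrt{d/\mu}\|p\|_1\bigr)$, so multiplying by $\sqrt{2\mu}$ gives
$\|\xip\|_0 \le Ch\bigl(\mu\sqrt{2(d+1)}\|\bu\|_2 + \sqrt{2d}\,\|p\|_1\bigr)$. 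Adding the interpolation bound for $\etap$ via the triangle inequality then yields \eqref{err-p}, with the $1$ in the factor $(1+\sqrt{d})$ coming from $\|\etap\|_0$ and the $\sqrt{d}$ coming from $\|\xip\|_0$.

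The only real care point is the $\mu$-scaling bookkeeping, together with the (routine but slightly tedious) jump-term estimate for $\enorm{\etau}$, where a trace inequality must be paired with the correct instance of \eqref{eqn:Pih_bd}. None of these steps presents a genuine obstacle: Lemma~\ref{lem:error-xi} has already absorbed the structural difficulty (the Babu\v{s}ka inf-sup bound on the composite bilinear form $\mathcal{L}$), so this final theorem is essentially a clean corollary obtained by assembling interpolation estimates with the bound on $\|\xi_{\bm{x}}\|_{\bXh}$.
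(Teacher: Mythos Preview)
Your proposal is correct and follows essentially the same approach as the paper: the paper's proof simply states that \eqref{err-u} and \eqref{err-p} are direct consequences of Lemma~\ref{lem:error-xi}, the interpolation error estimates for $\etau$ and $\etap$, and the triangle inequality. Your write-up actually supplies more detail than the paper does, including the $\mu$-scaling bookkeeping and the trace-inequality treatment of the jump term in $\enorm{\etau}$.
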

\begin{proof}
Error estimates~\eqref{err-u} and~\eqref{err-p} are direct consequences of Lemma~\ref{lem:error-xi}, interpolation error estimates of $\eta_{\bu}$ and $\eta_p$, and the triangle inequality.
%\begin{align*}
%\enorm{\bu - \bu_h} & \leq \enorm{\xi_{\bu}} + \enorm{\eta_{\bu}} \\
%&\leq Ch\left( \sqrt{\mu(d+1)} \| \bu \|_2 + \sqrt{\frac{d}{\mu}} \| p \|_1  \right) + C h \sqrt{\mu} \| \bu\|_2  \\
%& \leq C h \left( \sqrt{\mu (d+1)} \norm{\bu}_2 + \sqrt{\frac{d}{\mu}}\norm{p}_1\right)
%\end{align*}
%\begin{align*}
%\| p - p_h \|_0 & \leq \| \xi_p \|_0 + \| \eta_p \|_0 \\	
%& \leq Ch \sqrt{\mu} \left( \sqrt{\mu (d+1)} \| \bu \|_2 + \sqrt{\frac{d}{\mu}} \| p \|_1  \right) + Ch \| p \|_1 \\
%& \leq Ch \left( \mu \sqrt{(d+1)}  \| \bu \|_2 + (1 + \sqrt{d}) \|p\|_1\right)
%\end{align*}
\end{proof}

\section{Block Preconditioners}\label{sec:block-preconditioners}
When discretizing the Stokes equations, we obtain a large-scale, ill-conditioned linear system. We consider iterative solution techniques and use Krylov subspace methods to solve the system of equations.  In order to accelerate the convergence of Krylov subspace methods, following  the general framework developed in~\cite{LonghinWathen,MardalWinther,ma2016robust}, we develop robust block preconditioners by taking advantage of the well-posedness of the proposed EG discretization. 

%\subsection{Well-posedness}
The  linear system resulting from the EG method for solving \eqref{eqn: eg} can be written in the following two-by-two block form:
\begin{equation} \label{eqn:linear-system}
	\mathcal{A}\bm{x} = \bm{b}, \quad \mathcal{A} = \begin{pmatrix}
		\bm{A} &  \bm{B}^T \\
		\bm{B} &   \bm{0} 
	\end{pmatrix},
	\quad 
	\bm{x} = 
	\begin{pmatrix}
		\bu_h \\ 
		p_h
	\end{pmatrix},
	\ \text{and} \
	\bm{b} = 
	\begin{pmatrix}
		\bm{f} \\
		\bm{g}
	\end{pmatrix},
\end{equation}
where $\ba_{\theta}(\bu, \bv) \rightarrow \bm{A}$, $\bb(\bv, p) \rightarrow \bm{B}$, $\bF_\theta(\bv) \rightarrow \bm{f}$, and $\sum_{e \in \mathcal{E}_h^{\partial, d}}(\{ w \},  \jump{\bg} \cdot \bn_e)_e \rightarrow \bm{g}$ represent the discrete version of the weak formulation.

\subsection{Block Diagonal Preconditioner}
Based on the well-posedness result (Theorem~\ref{thm:well-posedness}) and the framework proposed in~\cite{LonghinWathen,MardalWinther}, a natural choice for a \emph{norm-equivalent} preconditioner is the Riesz operator with respect to the inner product corresponding to the weighted norm $\| \cdot \|_{\bm{X}_h}$.  This leads to the following block diagonal preconditioner in matrix form:
\begin{equation*}
	\mathcal{B}_D = 
	\begin{pmatrix}
		\bm{A}_{\mathcal{E}} & 0 \\
		0  &  \frac{1}{2\mu}\bm{M}_p
	\end{pmatrix}^{-1},
\end{equation*}
where $(\bu_h, \bv_h)_{\mathcal{E}} \rightarrow \bm{A}_{\mathcal{E}}$ and $(p_h, q_h) \rightarrow \bm{M}_p$. Note that, even for different choices of $\theta$, $\bm{A}_{\mathcal{E}}$ is SPD since it corresponds to the energy norm. In practice, applying the preconditioner $\mathcal{B}_D$ involves inverting the diagonal blocks, which could be expensive and sometimes infeasible for large-scale ill-conditioned problems. Therefore, we replace the inverse of the diagonal blocks of $\mathcal{B}_D$ by their spectral equivalent SPD approximations and define an inexact block diagonal preconditioner,
\begin{equation*}
	\mathcal{M}_D = 
	\begin{pmatrix}
		\bm{H}_{\mathcal{E}} & 0 \\
		0  &  \bm{H}_p
	\end{pmatrix},
\end{equation*}
where
\begin{align}
	&c_{1,\bu} (\bm{H}_{\mathcal{E}} \bu, \bu) \leq (\bm{A}_{\mathcal{E}}^{-1} \bu, \bu) \leq c_{2,\bu}  (\bm{H}_{\mathcal{E}} \bu, \bu),  \label{ine:u-equiv-blk}\\
	& c_{1,p} (\bm{H}_{p} p, p) \leq (2 \mu \bm{M}_{p}^{-1} p, p) \leq c_{2,p}  (\bm{H}_{p} p, p), \label{ine:p-equiv-blk}
\end{align}
for some constants $c_{1,\bu}, \ c_{2,\bu}, \ c_{1,p}, \ c_{2,p}>0$ that are independent of $h$ and $\mu$.  For example, we can use the method developed in~\cite{YiLeeZikatanov21} or a multigrid method to define $\bm{H}_{\mathcal{E}}$ and solve $\bm{A}_{\mathcal{E}}$.  Since $\bm{M}_p$ is diagonal, it can be inverted exactly and, therefore, we use $\bm{H}_p = 2\mu \bm{M}_p^{-1}$. Both $\mathcal{B}_D$ and $\mathcal{M}_D$ can be applied to the minimal residual (MINRes) method or the generalized MINRes (GMRes) method and the resulting iterative method is parameter-robust based on the general framework in~\cite{LonghinWathen,MardalWinther}. We summarize the results in the following theorem.
\begin{theorem}\label{thm:blk-diag-prec}
	If $\alpha$ is sufficiently large and \eqref{ine:u-equiv-blk} and~\eqref{ine:p-equiv-blk} hold with constants independent of parameters $\mu$ and $h$, then,
	\begin{equation*}
		\kappa(\mathcal{B}_D \mathcal{A}) = \mathcal{O}(1) \quad \text{and} \quad \kappa(\mathcal{M}_D \mathcal{A}) = \mathcal{O}(1), \label{eqn:blk-diag-prec-robust}
	\end{equation*}
where $\kappa(\cdot)$ is the condition number of a matrix.
\end{theorem}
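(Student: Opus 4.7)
The plan is to invoke the operator preconditioning framework of Mardal--Winther and Loghin--Wathen directly from the well-posedness already established in Theorem~\ref{thm:well-posedness}. Because that theorem was stated with continuity constant $\Upsilon$ and inf-sup constant $\gamma$ that are independent of $\mu$ and $h$, all the heavy lifting for the exact preconditioner $\mathcal{B}_D$ is already done; what remains is essentially bookkeeping. I will first handle $\mathcal{B}_D$ using the Riesz-map identification, and then derive the inexact case $\mathcal{M}_D$ as a spectral-equivalence perturbation.

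For the exact preconditioner, I would identify $\mathcal{A}$ as an operator $\mathcal{A}\colon \bXh \to \bXh^*$ induced by the composite bilinear form $\mathcal{L}$, and observe that the block $\operatorname{diag}(\bm{A}_{\mathcal{E}}, \frac{1}{2\mu}\bm{M}_p)$ is precisely the matrix representation of the Riesz operator $\mathcal{J}\colon \bXh \to \bXh^*$ induced by the inner product that generates the weighted norm $\|\cdot\|_{\bXh}$. Thus $\mathcal{B}_D = \mathcal{J}^{-1}$ and $\mathcal{B}_D\mathcal{A}\colon \bXh \to \bXh$ is symmetric in the $\mathcal{J}$-inner product. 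Using Theorem~\ref{thm:well-posedness}, for every $\bm{x}\in \bXh$,
\begin{equation*}
\gamma \|\bm{x}\|_{\bXh} \le \sup_{\by \in \bXh}\frac{|\mathcal{L}(\bm{x},\by)|}{\|\by\|_{\bXh}} = \|\mathcal{A}\bm{x}\|_{\bXh^*} = \|\mathcal{B}_D\mathcal{A}\bm{x}\|_{\bXh} \le \Upsilon \|\bm{x}\|_{\bXh}.
\end{equation*}
This shows the eigenvalues of $\mathcal{B}_D\mathcal{A}$ lie in $[-\Upsilon,-\gamma]\cup[\gamma,\Upsilon]$ (or equivalently the singular values lie in $[\gamma,\Upsilon]$), yielding $\kappa(\mathcal{B}_D\mathcal{A}) \le \Upsilon/\gamma = \mathcal{O}(1)$.

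For the inexact preconditioner $\mathcal{M}_D$, I would combine the spectral-equivalence hypotheses \eqref{ine:u-equiv-blk} and \eqref{ine:p-equiv-blk} into a single block statement
\begin{equation*}
\min(c_{1,\bu},c_{1,p})\,(\mathcal{M}_D\bm{x},\bm{x}) \le (\mathcal{B}_D\bm{x},\bm{x}) \le \max(c_{2,\bu},c_{2,p})\,(\mathcal{M}_D\bm{x},\bm{x}),
\end{equation*}
so $\mathcal{M}_D$ and $\mathcal{B}_D$ are spectrally equivalent with constants independent of $\mu$ and $h$. Then a standard estimate for products of SPD-equivalent operators with a saddle-point operator (or equivalently a change-of-inner-product argument) gives $\kappa(\mathcal{M}_D\mathcal{A}) \le (\max c_{i,\cdot}/\min c_{i,\cdot})\cdot \kappa(\mathcal{B}_D\mathcal{A}) = \mathcal{O}(1)$.

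The only real subtlety, and the step I would be most careful about, is the first display: writing $\|\mathcal{A}\bm{x}\|_{\bXh^*} = \|\mathcal{B}_D\mathcal{A}\bm{x}\|_{\bXh}$ requires the Riesz-map identification to be stated cleanly in the matrix-vector setting (so that the dual norm on $\bXh^*$ really is the $\mathcal{B}_D$-induced norm on $\bXh$). Once that identification is in place, the rest is immediate from Theorem~\ref{thm:well-posedness} and the spectral-equivalence assumptions.
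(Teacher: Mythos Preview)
The paper does not actually prove this theorem; it simply states the result and attributes it to the general operator-preconditioning framework of Mardal--Winther and Loghin--Wathen combined with Theorem~\ref{thm:well-posedness}. Your proposal is precisely that argument written out in detail, and it is correct in substance: the Riesz identification $\mathcal{B}_D=\mathcal{J}^{-1}$, the chain $\gamma\|\bm{x}\|_{\bXh}\le\|\mathcal{B}_D\mathcal{A}\bm{x}\|_{\bXh}\le\Upsilon\|\bm{x}\|_{\bXh}$, and the spectral-equivalence perturbation for $\mathcal{M}_D$ are exactly the standard steps.

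One small inaccuracy worth cleaning up: you claim $\mathcal{B}_D\mathcal{A}$ is symmetric in the $\mathcal{J}$-inner product. That holds only when $\mathcal{L}$ is symmetric, i.e., for $\theta=-1$ (SIPG); for $\theta\in\{0,1\}$ the form $\ba_\theta$---and hence $\mathcal{A}$---is nonsymmetric (indeed the paper's numerics use $\theta=0$ and GMRes). This does not damage your argument, since your displayed bound on $\|\mathcal{B}_D\mathcal{A}\bm{x}\|_{\bXh}$ already controls the singular values directly and yields $\kappa(\mathcal{B}_D\mathcal{A})\le\Upsilon/\gamma$ without any appeal to symmetry; your parenthetical about singular values is the right statement, and the eigenvalue-interval claim should simply be dropped.
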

%\begin{proof}
%	\eqref{eqn:blk-diag-prec-robust} can be obtained directly from Theorem~\ref{thm:well-posedness} by applying the general framework in [XXX].
%\end{proof}

\subsection{Block Triangular Preconditioner}
Similarly, following the framework presented in~\cite{LonghinWathen,adler2017robust,2020AdlerGasparHuOhmRodrigoZikatanov-a}, we can also develop block triangular preconditioners (field-of-value-equivalent preconditioners). Based on the well-posedness, Theorem~\ref{thm:well-posedness}, and the Riesz operator, $\mathcal{B}_D$, we consider the following block lower triangular preconditioner,
\begin{equation*} 
	\mathcal{B}_L = 
	\begin{pmatrix}
		\bm{A}_{\mathcal{E}} & 0 \\
		\bm{B}  &  \frac{1}{2 \mu} \bm{M}_p
	\end{pmatrix}^{-1}
\end{equation*}
and upper triangular preconditioner,
\begin{equation*} 
	\mathcal{B}_U = 
	\begin{pmatrix}
		\bm{A}_{\mathcal{E}} & \bm{B}^T \\
		0  &  \frac{1}{2 \mu} \bm{M}_p
	\end{pmatrix}^{-1}.
\end{equation*}
  Again, in practice, the inversion of $\bm{A}_{\mathcal{E}}$ can be defined by the method proposed in \cite{YiLeeZikatanov21} or a multigrid method, while $\bm{M}_p^{-1}$ is computed exactly. These approaches provide spectral equivalent symmetric and positive definite approximations as shown in~\eqref{ine:u-equiv-blk} and~\eqref{ine:p-equiv-blk}, and they define the following inexact block triangular preconditioners:
\begin{equation*}
	\mathcal{M}_L = 
	\begin{pmatrix}
		\bm{H}_{\mathcal{E}}^{-1} & 0 \\
		\bm{B}  & \bm{H}_p^{-1}
	\end{pmatrix}^{-1}
	\quad \text{and} \quad 
	\mathcal{M}_U = 
	\begin{pmatrix}
		\bm{H}_{\mathcal{E}}^{-1} & \bm{B}^T \\
		0  &  \bm{H}_p^{-1}
	\end{pmatrix}^{-1}.
\end{equation*}

Following~\cite{LonghinWathen,adler2017robust,2020AdlerGasparHuOhmRodrigoZikatanov-a}, we show that the block triangular preconditioners are field-of-value-equivalent with $\mathcal{A}$. Due to the length constraint of this paper and the fact the proofs are similar to those in~\cite{adler2017robust,2020AdlerGasparHuOhmRodrigoZikatanov-a}, we only state the results here. 

\begin{theorem}\label{thm:blk-tri-prec}
	Assume $\alpha$ is sufficiently large and~\eqref{ine:u-equiv-blk} and~\eqref{ine:p-equiv-blk} hold with constants independent of parameters $\mu$ and $h$.  Furthermore, assume that $\| \bm{I} - \bm{H}_{\mathcal{E}} \bm{A}_{\mathcal{E}} \|_{\mathcal{E}} \leq \rho$, $0 \leq  \rho <1$.  Then, there exist constants $\Upsilon_1$ and $\Upsilon_2$, independent of discretization or physical parameters, such that, for $\bm{0} \neq \bm{x} \in \bX_h$ or $\bm{0} \neq \bx' \in \bXh'$,
	\begin{align*}
		&\Upsilon_1 \leq \frac{(\mathcal{B}_L\mathcal{A}\bx, \bx)_{\mathcal{B}_D^{-1}}}{(\bx,\bx)_{\mathcal{B}_D^{-1}}}, \quad \frac{\| \mathcal{B}_L \mathcal{A} \bx \|_{\mathcal{B}_D^{-1}} }{\| \bx \|_{\mathcal{B}_D^{-1}}} \leq \Upsilon_2, \\
		&\Upsilon_1 \leq \frac{(\mathcal{M}_L\mathcal{A}\bx, \bx)_{\mathcal{M}_D^{-1}}}{(\bx,\bx)_{\mathcal{M}_D^{-1}}}, \quad \frac{\| \mathcal{M}_L \mathcal{A} \bx \|_{\mathcal{M}_D^{-1}} }{\| \bx \|_{\mathcal{M}_D^{-1}}} \leq \Upsilon_2, \\
		&\Upsilon_1 \leq \frac{(\mathcal{A}\mathcal{B}_U\bx', \bx')_{\mathcal{B}_D}}{(\bx',\bx')_{\mathcal{B}_D}}, \quad \frac{\| \mathcal{A}\mathcal{B}_U  \bx \|_{\mathcal{B}_D} }{\| \bx' \|_{\mathcal{B}_D}} \leq \Upsilon_2, \\
		&\Upsilon_1 \leq \frac{(\mathcal{A}\mathcal{M}_U\bx', \bx')_{\mathcal{M}_D}}{(\bx',\bx')_{\mathcal{M}_D}}, \quad \frac{\| \mathcal{A}\mathcal{M}_U  \bx \|_{\mathcal{M}_D} }{\| \bx' \|_{\mathcal{M}_D}} \leq \Upsilon_2.
	\end{align*}
Here, given a symmetric positive definite matrix $\mathcal{D}$, $(\bx, \by)_{\mathcal{D}}:= (\mathcal{D}\bx, \by)$ denotes its induced inner product and the corresponding norm is defined as $\| \bx \|_{\mathcal{D}} := \sqrt{(\bx, \bx)_{\mathcal{D}}}$.
\end{theorem}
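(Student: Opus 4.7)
The plan is to adapt the field-of-values-equivalence framework of~\cite{adler2017robust,2020AdlerGasparHuOhmRodrigoZikatanov-a} to the present setting, exploiting the well-posedness of $\mathcal{L}$ established in Theorem~\ref{thm:well-posedness}. The key starting observation is that
\[
\|\bx\|_{\mathcal{B}_D^{-1}}^2 = \enorm{\bu}^2 + \frac{1}{2\mu}\|p\|_0^2 = \|\bx\|_{\bXh}^2,
\]
so the continuity constant $\Upsilon$ and the inf-sup constant $\gamma$ of $\mathcal{L}$ already control $\mathcal{A}$ in this inner product; the spectral equivalences~\eqref{ine:u-equiv-blk}--\eqref{ine:p-equiv-blk} then transfer all such bounds to $\|\cdot\|_{\mathcal{M}_D^{-1}}$ with constants independent of $\mu$ and $h$.

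For the exact lower-triangular preconditioner, I would use the identity
\[
\mathcal{B}_L \mathcal{A} = I + \mathcal{B}_L\bigl(\mathcal{A} - \mathcal{B}_L^{-1}\bigr),
\]
in which $\mathcal{A} - \mathcal{B}_L^{-1}$ has the block form with entries $\bm{A}-\bm{A}_{\mathcal{E}}$, $\bm{B}^T$, $\bm{0}$, and $-\tfrac{1}{2\mu}\bm{M}_p$. The spectral equivalence of $\bm{A}$ and $\bm{A}_{\mathcal{E}}$ (immediate from Lemmas~\ref{lem: u-coer} and~\ref{lem:u-conti}) tames the $(1,1)$ perturbation. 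Expanding $(\mathcal{B}_L\mathcal{A}\bx,\bx)_{\mathcal{B}_D^{-1}}$ through this identity reduces the lower bound to a quadratic form in $(\bu,p)$ that is controlled below via the coercivity of $\bah$ and the discrete inf-sup condition for $\bbh$ (Lemma~\ref{lem:inf-sup}), while the upper bound follows from Cauchy--Schwarz together with the continuity of $\bah$ and $\bbh$ (Lemmas~\ref{lem:u-conti}--\ref{lem:b-conti}). The block upper-triangular case is handled by the mirror identity $\mathcal{A}\mathcal{B}_U = I + (\mathcal{A}-\mathcal{B}_U^{-1})\mathcal{B}_U$, yielding analogous bounds in $\|\cdot\|_{\mathcal{B}_D}$.

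The inexact variants $\mathcal{M}_L$ and $\mathcal{M}_U$ are then treated as perturbations of $\mathcal{B}_L$ and $\mathcal{B}_U$. Writing $\mathcal{M}_L = \mathcal{B}_L + (\mathcal{M}_L - \mathcal{B}_L)$ and bounding the correction through~\eqref{ine:u-equiv-blk}--\eqref{ine:p-equiv-blk} reproduces the same field-of-values estimates modulo the equivalence constants $c_{i,\bu}, c_{i,p}$. The contraction hypothesis $\|I - \bm{H}_{\mathcal{E}}\bm{A}_{\mathcal{E}}\|_{\mathcal{E}} \leq \rho < 1$ enters exactly at this step: it ensures that replacing $\bm{A}_{\mathcal{E}}^{-1}$ by $\bm{H}_{\mathcal{E}}$ in the off-diagonal coupling $\bm{B}\bm{H}_{\mathcal{E}}$ (resp.\ $\bm{H}_{\mathcal{E}}\bm{B}^T$) produces a genuine contraction and does not destroy the coercivity of the perturbed symmetric part.

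The main obstacle I anticipate is tracking the $\mu$-weights through the block computations. The velocity and pressure contributions to $\|\cdot\|_{\bXh}$ carry opposite $\mu$-scalings ($2\mu$ vs.\ $1/(2\mu)$), and the cross terms introduced by $\bm{B}$ and $\bm{B}^T$ in $\mathcal{A}-\mathcal{B}_L^{-1}$ mix the two. The cancellation of these factors, which is what ultimately yields constants $\Upsilon_1,\Upsilon_2$ truly independent of $\mu$, relies on using precisely the weighted norm from Section~\ref{sec:well-posedness} and the $\mu$-scalings already present in Lemmas~\ref{lem:u-conti}--\ref{lem:b-conti} and Lemma~\ref{lem:inf-sup}. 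Once this bookkeeping is in place, the remaining algebra is a direct transcription of the arguments in~\cite{adler2017robust,2020AdlerGasparHuOhmRodrigoZikatanov-a}.
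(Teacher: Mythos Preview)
The paper does not actually give a proof of this theorem: immediately before the statement it writes ``Due to the length constraint of this paper and the fact the proofs are similar to those in~\cite{adler2017robust,2020AdlerGasparHuOhmRodrigoZikatanov-a}, we only state the results here.'' So there is no argument in the paper to compare against; the authors simply defer to the field-of-values-equivalence framework in the cited references.

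Your proposal is precisely an outline of that framework, specialized to the present EG discretization. The identification $\|\bx\|_{\mathcal{B}_D^{-1}} = \|\bx\|_{\bXh}$, the use of Theorem~\ref{thm:well-posedness} for the continuity and inf-sup constants of $\mathcal{L}$, the spectral equivalence of $\bm{A}$ and $\bm{A}_{\mathcal{E}}$ via Lemmas~\ref{lem: u-coer}--\ref{lem:u-conti}, the block decomposition of $\mathcal{A}-\mathcal{B}_L^{-1}$, and the perturbation step using~\eqref{ine:u-equiv-blk}--\eqref{ine:p-equiv-blk} together with the contraction $\|I-\bm{H}_{\mathcal{E}}\bm{A}_{\mathcal{E}}\|_{\mathcal{E}}\le\rho$ are all the correct ingredients, and your remark about tracking the $\mu$-weights through the cross terms is exactly the point where the argument needs care. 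In short, your plan matches what the paper intends by citing~\cite{adler2017robust,2020AdlerGasparHuOhmRodrigoZikatanov-a}, and there is nothing further in the paper itself to compare against.
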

%\begin{proof}
%	The derivations are essentially the same as the step in [XXX], therefore, we omited the details here. 
%\end{proof}
Such block triangular preconditioners are applied to GMRes and, following the framework in~\cite{LonghinWathen,adler2017robust}, provide parameter-robust iterative methods for solving the proposed EG discretizations.

\section{Numerical Results}\label{sec:numerical-results}
We now present several numerical examples in order to validate the convergence analysis and the performance of the proposed linear solver with the preconditioner. In addition, mixed boundary conditions and different viscosities  are tested to investigate the capability of our method. For all examples in this section, we focus on the IIPG method ($\theta =0$) because it exposes the difficulties in approximating the solution to the Stokes equations and in preconditiong the resulting non-symmetric linear system robustly.  All numerical experiments, including the implementation of EG discretization and the  preconditioned linear solvers, were implemented using the finite-element and solver library HAZmath \cite{2014AdlerJ_HuX_ZikatanovL-aa}.

For testing the performance of the block preconditioners, flexible GMRes is used to solve the linear systems~\eqref{eqn:linear-system} obtained by the EG discretization~\eqref{eg1}-\eqref{eg2}. %with $\theta = 0$, i.e., IIPG scheme. $\alpha = 1$ is used in all the experiments. 
A stopping tolerance of $10^{-6}$ is used for the relative residual ($\frac{\| \bm{b} - \mathcal{A} \bm{x}^k \|}{\| \bm{b} \|}$) of the linear system~\eqref{eqn:linear-system}, where $\bm{x}^k$ is the $k$-th iteration of the flexible GMRes method. For the implementation of $\mathcal{B}_D$, $\mathcal{B}_L$, and $\mathcal{B}_U$, we use $\bm{A}$ instead of $\bm{A}_{\mathcal{E}}$ as the first diagonal block in order to make our preconditioner more user-friendly (note that $\bm{A}_{\mathcal{E}}$ needs to be assembled separately) and then solve it exactly by using the UMFPACK library~\cite{davis2004algorithm}.  Based on Lemma~\ref{lem: u-coer} and~\ref{lem:u-conti} and Theorem~\ref{thm:blk-diag-prec} and~\ref{thm:blk-tri-prec}, this still gives robust preconditioners which will be confirmed by our numerical experiments. For the implementation of $\mathcal{M}_D$, $\mathcal{M}_L$, and $\mathcal{M}_U$, the action of $\bm{A}^{-1}$ is approximated by the GMRes method with the preconditioner proposed in~\cite{YiLeeZikatanov21}, which was specially designed for the EG scheme. To be more precise, we use a multiplicative version preconditioner in which the continuous linear element part is solved by one step of a V-cycle-smoothed aggregation algebraic multigrid method, and the Jaocbi method is used as a global smoother.  Since we only need to solve the diagonal block $\bm{A}$ approximately, a relative residual tolerance of $10^{-3}$ is used for the preconditioned GMRes method. On the other hand, since we use a piecewise constant space for the pressure, $\bm{M}_p$ is a diagonal matrix which can be inverted easily.  Therefore, we just invert the second diagonal block $\frac{1}{2\mu} \bm{M}_p$ exactly in all the block preconditioners. 

\subsection{Example 1: Homogeneous Dirichlet Boundary Conditions} 
\label{sec:ex1}
First, we consider the Stokes system \eqref{eqn: stokes}, with homogenous Dirichlet boundary conditions defined on a computational domain, $\Omega = [0,1]^2$. % where $d=2,3$.
The divergence-free ($\nabla \cdot \bu = 0$), exact solutions are given as 
\begin{equation*}
	\bu = 
	\begin{pmatrix}
		u_1 \\
		u_2 
	\end{pmatrix}
	=
	\begin{pmatrix}
		\sin(\pi x)  \sin(\pi y) \\
		\cos(\pi x)  \cos(\pi y) 
	\end{pmatrix},
	\ \  p = \sin(\pi x) \cos(\pi y) 
\end{equation*}
%    and 
%    \begin{equation}
%    \bu = 
%     \begin{pmatrix}
%     u_1 \\
%     u_2 \\ 
%     u_3
%    \end{pmatrix}
%    =
%    \begin{pmatrix}
%     div \\
%     free \\
%     3d
%    \end{pmatrix},
%    \ \  p = with \  z
%    \end{equation}
%for two and three dimension, respectively, 
with $\mu = 1$. 
%We impose a Dirichlet boundary condition $\bu = \bu_D$, where $\bu_D$ is computed from the exact solution. We employ a linear EG scheme for the velocity and piecewise constants for the pressure. 

The error is measured in the energy norm for the velocity and in the $L^2$-norm for the pressure on uniform meshes with various mesh sizes $h=2^{-L}, L=2, \hdots, 6$. 
The penalty parameter is set to $\alpha=1$. 
The results of error computations are summarized in Table~\ref{tab:tab1}, where we observe optimal convergence rates in the both velocity and  pressure. %Moreover, this example clearly demonstrates that the new EG method satisfies the inf-sup condition by adding only one additional local degree of freedom per element for the velocity in the linear case. 
Next, Table~\ref{tab:prec-ex1} shows iterations counts for the block preconditioners with different mesh sizes.  The number of iterations are relatively consistent for all cases, which shows that the proposed block preconditioners are robust with respect to the discretization parameters.  The block lower and upper triangular preconditioners perform slightly better than the block diagonal preconditioners as expected, since they contain more coupling information of the original system. 

\begin{table}[!h]
%	\footnotesize
	\centering
	\begin{tabular}{|c||c|c|c||c|c|c|}
		\hline
%		&  \multicolumn{3}{c||}{IIPG EG $u$}  &  \multicolumn{3}{c|}{$p$ }   \\ \hline
		$h$    & DoFs & $\enorm{ u -U}$ & Rate & DoFs & $ \| p - P \|_0$ & Rate  \\ \hline
		1/4 & 82 &  1.3624 &  0.00 & 32  & 1.1553  & 0.00  \\ \hline 
		1/8 & 290 &  0.6706 &  1.12 & 128  & 0.4991  & 1.21  \\ \hline 
		1/16 & 1090 &  0.3206 &  1.11 & 512  & 0.1914  & 1.38  \\ \hline 
		1/32 & 4226 &  0.1545 &  1.07 & 2048  & 0.0726 & 1.39  \\ \hline 
		1/64 & 16642 &  0.0756 &  1.04 & 8192  & 0.0286  & 1.34  \\ \hline 
	\end{tabular}
	\caption{Example 7.1:
Convergence study for the IIPG-EG method ($\theta=0$). %for the velocity and $P_0$  piecewise constant function space for the pressure.
The penalty parameter is set to $\alpha=1$. Here, DoFs refers to the total number of degrees of freedom for each space. }
	\label{tab:tab1}
\end{table}

\begin{table}[h!]
	\begin{center}
		\begin{tabular}{| c || c | c | c || c | c | c |}
			\hline %\hline
%			\multicolumn{7}{|c|}{IIPG} \\ \hline 
			$h$ & $\mathcal{B}_D$ & $\mathcal{B}_L$ & $\mathcal{B}_U$  & $\mathcal{M}_D$ & $\mathcal{M}_L$ & $\mathcal{M}_U$ 
			\\ 
			\hline 
			1/8	& 22 & 11 & 11 & 22 & 14 & 17  \\
			% Entering 2nd row
			1/16 & 24 & 12 & 12 & 24 & 15 & 18 \\
			% Entering 3rd row
			1/32 & 24 & 11 & 11 & 24 & 15 & 18  \\
			% Entering 4th row
			1/64 & 22 & 11 & 10 & 24 & 14 & 20  \\
			% [1ex] adds vertical space
			\hline %\hline 
		\end{tabular} 
	\end{center}
    \caption{Example~\ref{sec:ex1}. Iteration counts for the block preconditioners.}
	\label{tab:prec-ex1}
\end{table}

\subsection{Example 2: Mixed Boundary Conditions}  
\label{sec:ex2}
Next, we consider an example with mixed boundary conditions.
Dirichlet boundary conditions are imposed on the left and the right side of the boundary, $\partial \Omega$ (i.e., $\bu = \bg$ on $x=0$ and $x=1$), and Neumann boundary conditions are applied on the top and the bottom of the boundary, $\partial \Omega$ (i.e., $(2\mu \epsilon(\bu) - p \bI) \bn = \bs$ on $y=0$ and $y=1$).
The rest of the setup is the same as in Example 1, and  $\bg$ and $\bs$ are computed from the given exact solutions.  

%We measure the error in the energy norm for the velocity and in the $L^2$-norm for the pressure on uniform meshes with various mesh sizes $h=2^{-L}, L=2, \cdots,6$. 
%As before, the penalty parameter is set as $\alpha=10$, $\alpha=1$, and $\alpha=1$ for the SIPG, IIPG, and NIPG methods, respectively.
The computed results are presented in Table~\ref{tab:tab2}, and we observe that the EG method yields optimal convergence rates in both the velocity and the pressure.
In addition,  Table~\ref{tab:prec-ex2} results for the block preconditioners yield similar results to the previous example.  Thus, the proposed block preconditioners are robust for this example as well.
\begin{table}[!h]
%	\footnotesize
	\centering
	\begin{tabular}{|c||c|c|c||c|c|c|}
		\hline
%		&  \multicolumn{3}{c||}{IIPG EG $u$}  &  \multicolumn{3}{c|}{$p$ }   \\ \hline
		$h$    & DoFs & $\enorm{ u -U}$ & Rate & DoFs & $ \| p - P \|_0$ & Rate  \\ \hline
		1/4 & 82 &  1.4728&  0.00 & 32  & 0.7767  & 0.00  \\ \hline 
		1/8 & 290 &  0.6761 &  1.23 & 128  & 0.3554  & 1.12  \\ \hline 
		1/16 & 1090 &  0.3165 &  1.14 & 512  & 0.1406  & 1.33  \\ \hline 
		1/32 & 4226 &  0.1526 &  1.07 & 2048  & 0.0572  & 1.29  \\ \hline 
		1/64 & 16642 &  0.0750 &  1.03 & 8192  & 0.0246  & 1.21  \\ \hline 
	\end{tabular}
	\caption{Example 7.2: Convergence study for the IIPG-EG method ($\theta =0$) with mixed boundary conditions. The penalty parameter is set to $\alpha=1$. Here, DoFs refers to the total  number of degrees of freedom for each space. }
	\label{tab:tab2}
\end{table}

\begin{table}[!h] 
	\begin{center}
		\begin{tabular}{| c || c | c  | c || c | c | c |}
			\hline %\hline
%			\multicolumn{7}{|c|}{IIPG} \\ \hline 
			$h$ & $\mathcal{B}_D$ & $\mathcal{B}_L$ & $\mathcal{B}_U$  & $\mathcal{M}_D$ & $\mathcal{M}_L$ & $\mathcal{M}_U$ 
			\\ 
			\hline 
			1/8 	& 20 &  9 &  9 & 20 & 14 &  9  \\
			% Entering 2nd row
			1/16 & 20 & 10 &  9 & 22 & 14 &  9 \\
			% Entering 3rd row
			1/32 & 20 & 10 &  9 & 22 & 14 &  9  \\
			% Entering 4th row
			1/64 & 20 &  9 &  8 & 25 & 14 &  9  \\
			% [1ex] adds vertical space
			\hline %\hline 
		\end{tabular}
	\end{center}
    \caption{Example~\ref{sec:ex2}. Iteration counts for the block preconditioners.}
	\label{tab:prec-ex2}
\end{table}

%\clearpage
\newpage
\subsection{Example 3: Channel Flow with Varying Viscosities} \label{sec:ex3}
%%%
\begin{wrapfigure}{r}{0.25\textwidth}
	\begin{center}
		\includegraphics[width=0.25\textwidth]{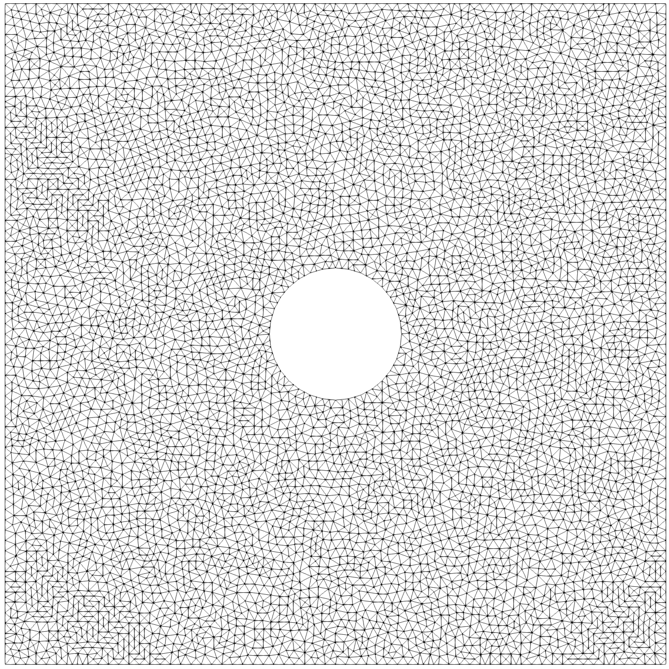}
	\end{center}
	\caption{Example 3. The domain}
	\label{fig1}
\end{wrapfigure}

In this example, we test a two-dimensional channel flow around a circular obstacle in the computational domain, 
$\Omega = [0,1]^2$.  A circular hole is centered at $(0.5, 0.5)$ with radius $0.1$ as shown in  Figure \ref{fig1}. 
We impose non-homogeneous boundary conditions as 
\begin{equation*}
	\bg= 
	\begin{cases}
		(4y(1-y),0)^{T} \ \text{ if } x = 0, \\
		(4y(1-y),0)^{T} \ \text{ if } x = 1, \\
		(0,0)^{T} \ \text{ elsewhere}, 
	\end{cases}
	\label{eqn:ex3}
\end{equation*}
and we test with several different viscosities: 
$\mu=0.001, 0.01, 0.1$, and  $1$. For all cases, the minimum mesh size is $h=2^{-7}$, and the number of degrees of freedom for the velocity and the pressure are $25,780$ and $12,718$, respectively.  The penalty coefficient is set to be $\alpha = 1$.

The vector fields of the velocity, streamlines of the velocity, and pressure values are illustrated in Figures \ref{fig:ex4_1000}-\ref{fig:ex4_1}, corresponding to the increasing values of $\mu$ from $0.001$ to $1$. 
We observe less turbulent behavior in the velocity as $\mu$ increases (equivalent to the Reynolds number getting smaller) as  expected.

\begin{figure}[!h]
	\centering
	\subfloat[Velocity $\bu$]{
		\includegraphics[width=0.33\textwidth]{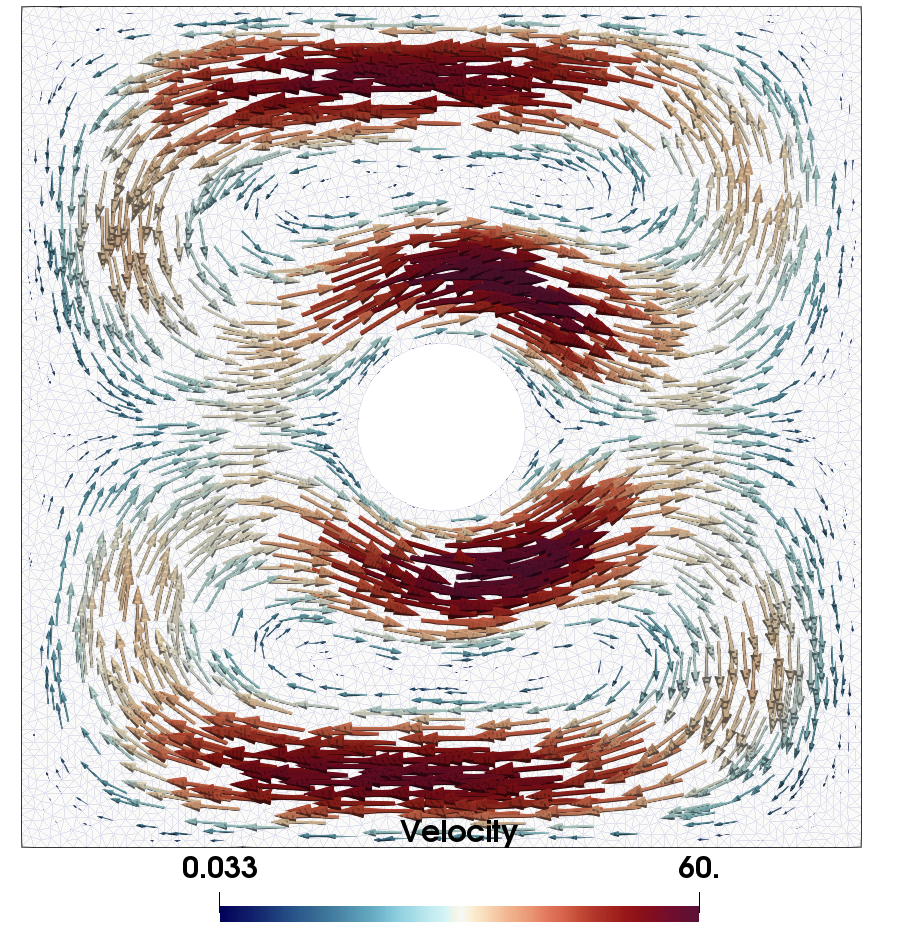}
	}
	\subfloat[Streamlines of $\bu$]{
		\includegraphics[width=0.33\textwidth]{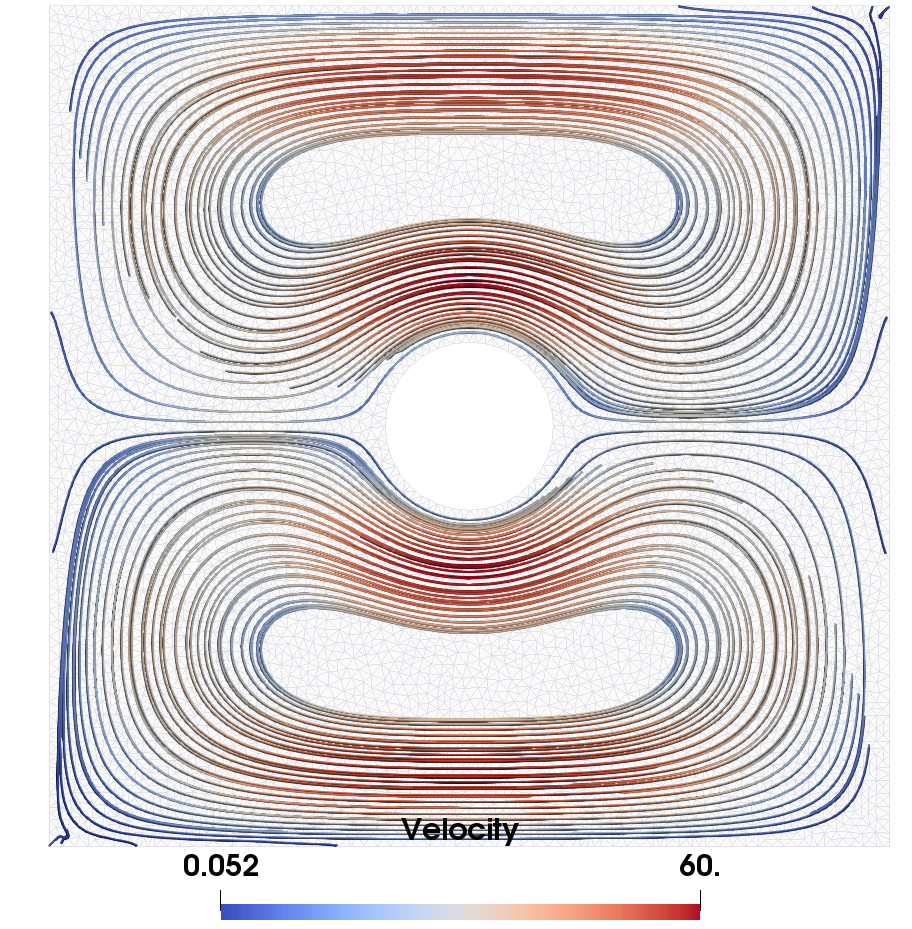}
	}
	\subfloat[Pressure p]{
		\includegraphics[width=0.33\textwidth]{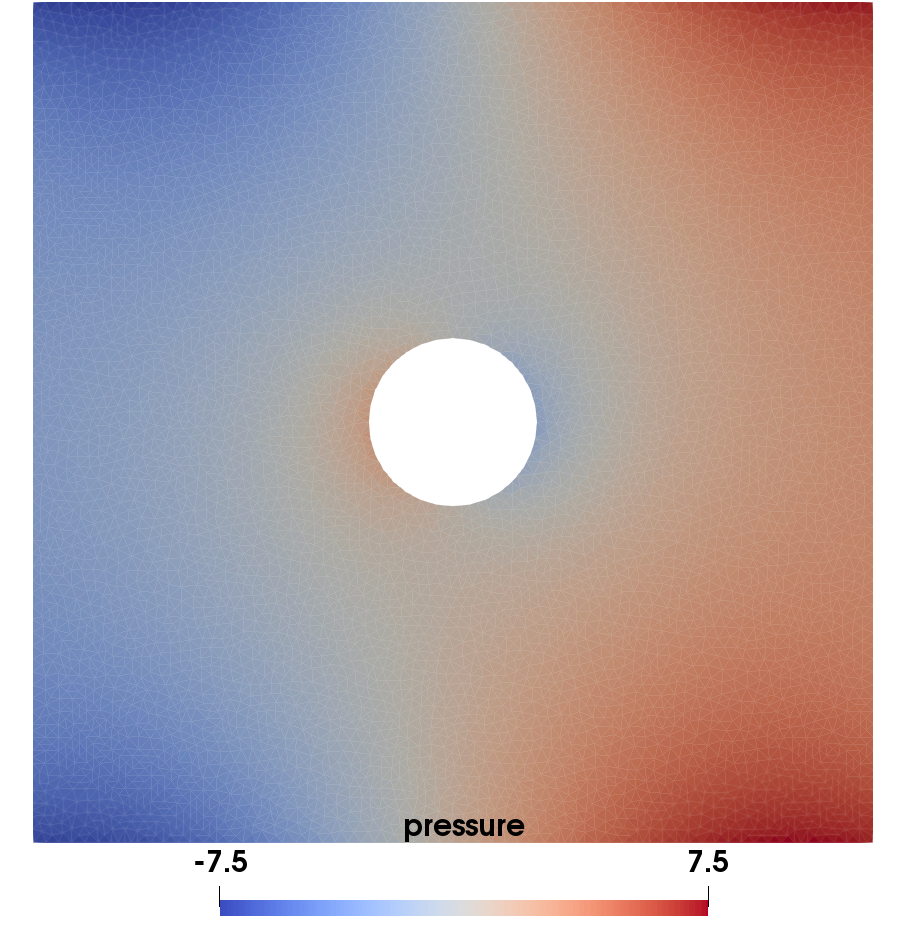}
	}
	\caption{Example 7.3: The velocity and pressure profiles for $\mu=0.001$.}
	\label{fig:ex4_1000}
\end{figure}

\begin{figure}[!h]
	\centering
	\subfloat[Velocity $\bu$]{
		\includegraphics[width=0.33\textwidth]{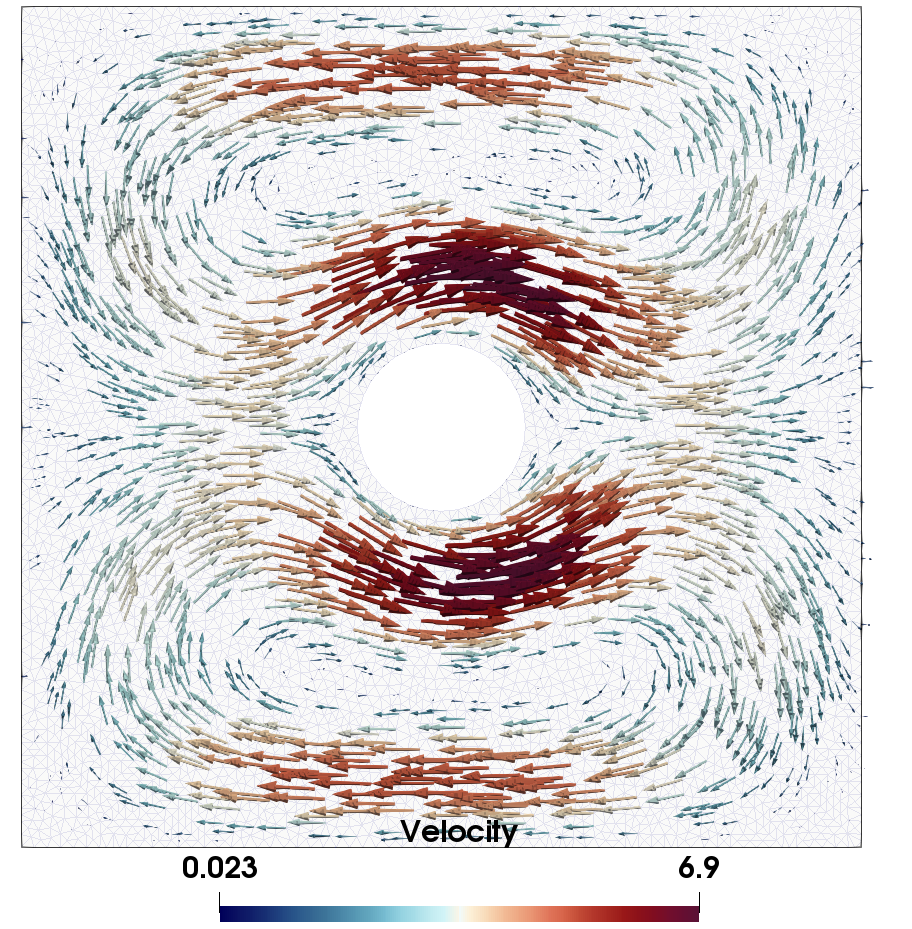}
	}
	\subfloat[Streamlines of $\bu$]{
		\includegraphics[width=0.33\textwidth]{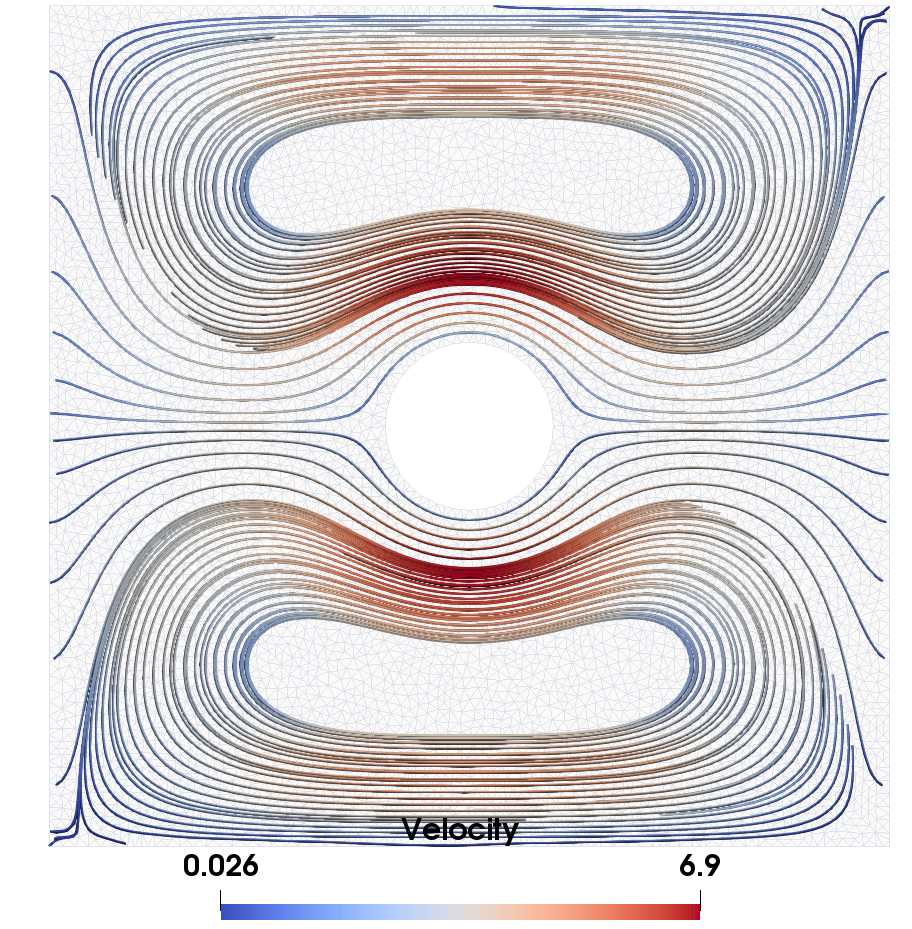}
	}
	\subfloat[Pressure p]{
		\includegraphics[width=0.33\textwidth]{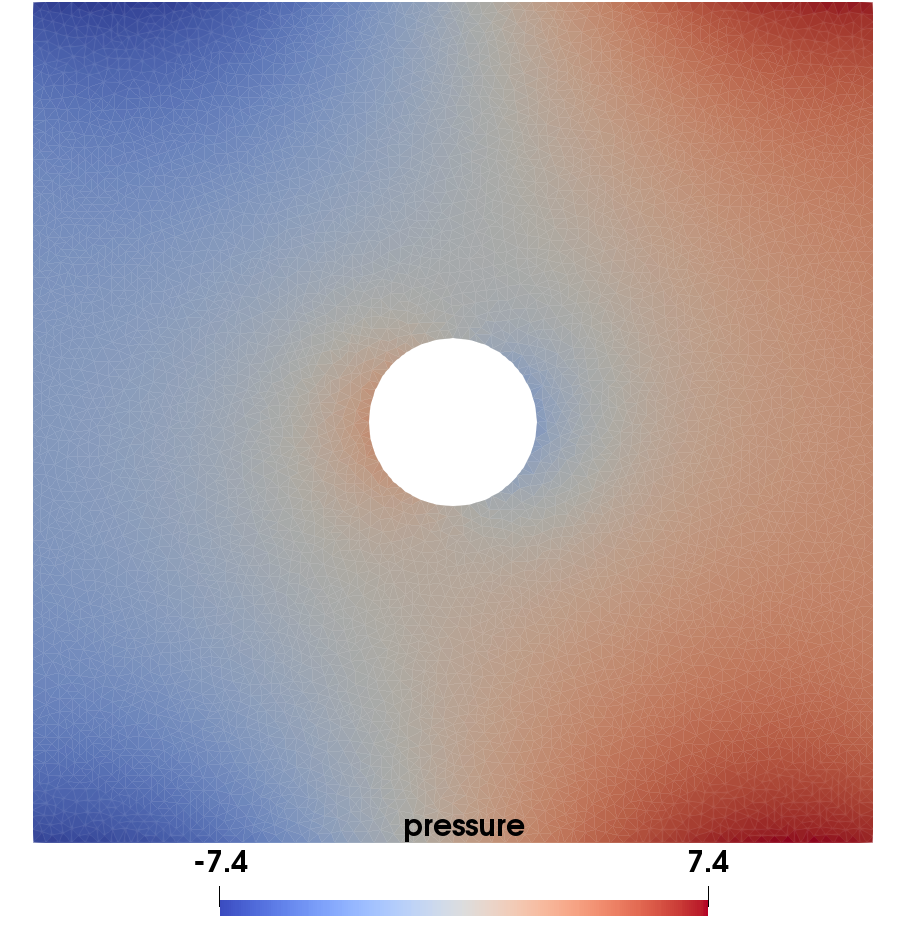}
	}
	\caption{Example 7.3: The velocity and pressure profiles for $\mu=0.01$.}
	\label{fig:ex4_100}
\end{figure}
\begin{figure}[!h]
	\centering
	\subfloat[Velocity $\bu$]{
		\includegraphics[width=0.33\textwidth]{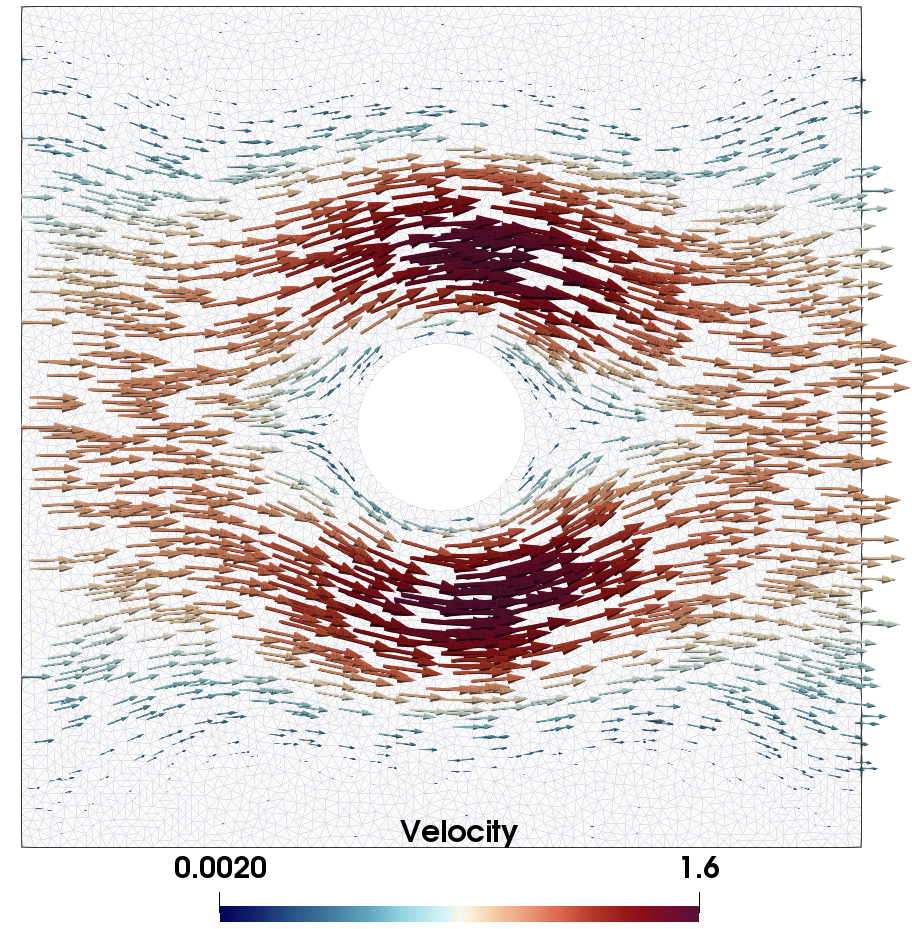}
	}
	\subfloat[Streamlines of $\bu$]{
		\includegraphics[width=0.33\textwidth]{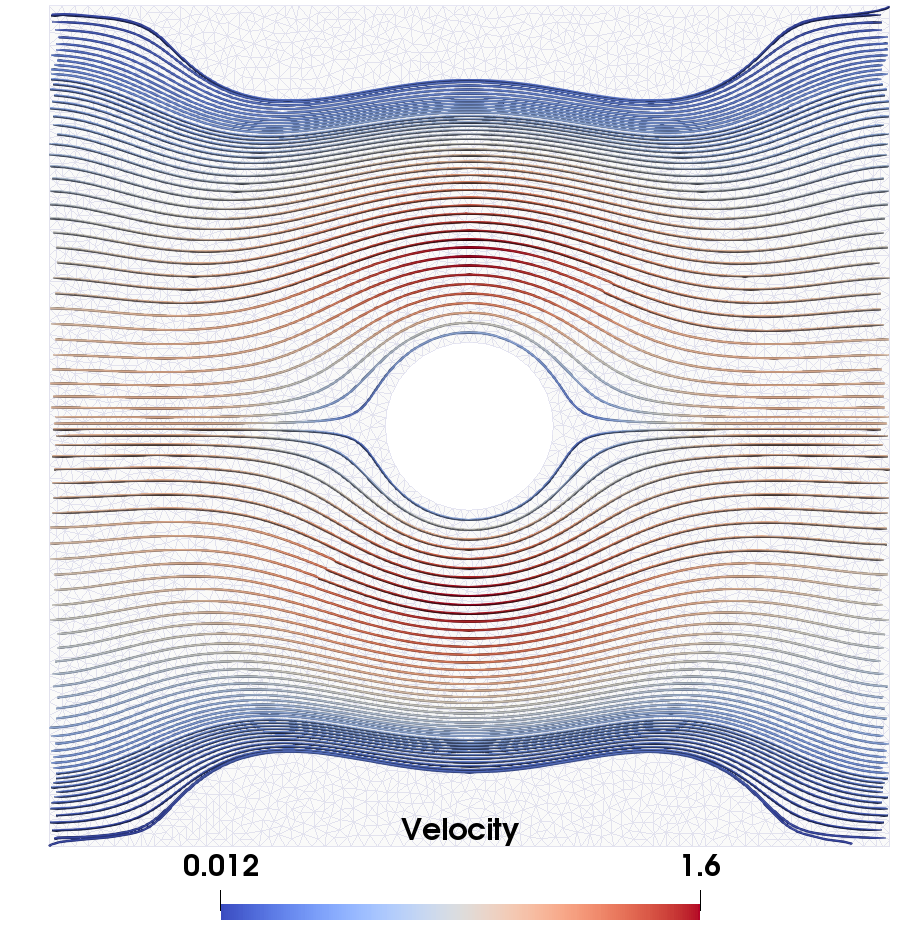}
	}
	\subfloat[Pressure p]{
		\includegraphics[width=0.33\textwidth]{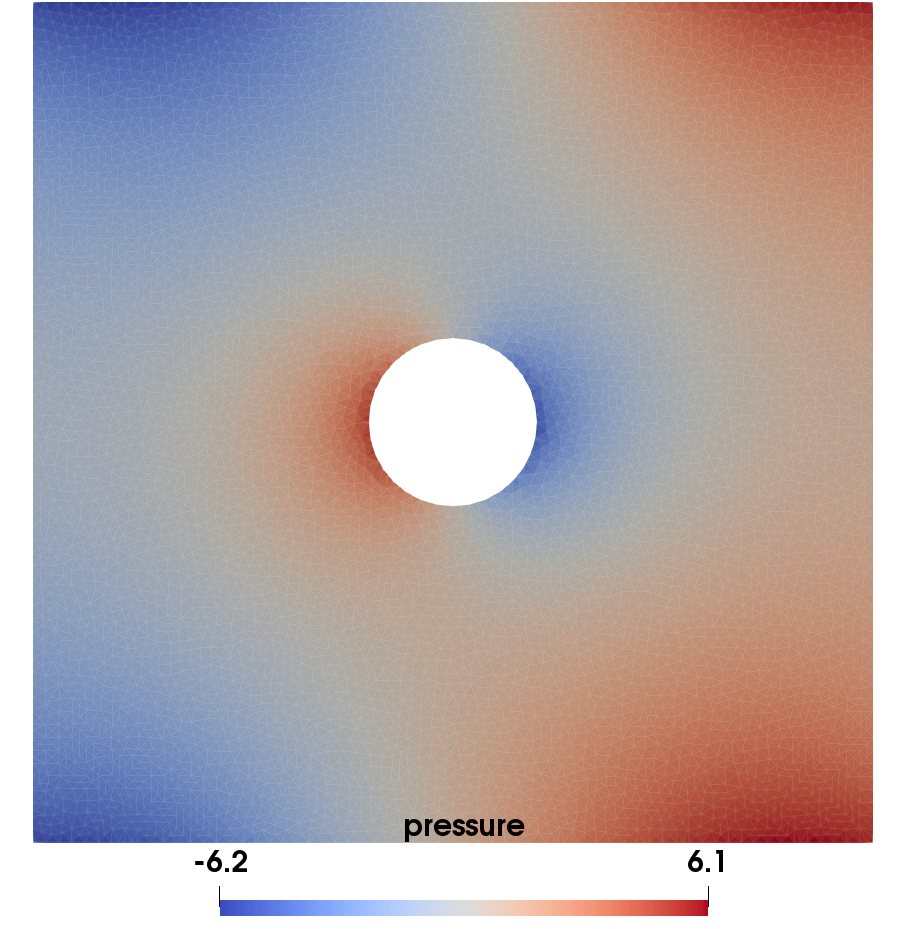}
	}
	\caption{Example 7.3: The velocity and pressure profiles for $\mu=0.1$.}
	\label{fig:ex4_10}
\end{figure}
\begin{figure}[!h]
	\centering
	\subfloat[Velocity $\bu$]{
		\includegraphics[width=0.33\textwidth]{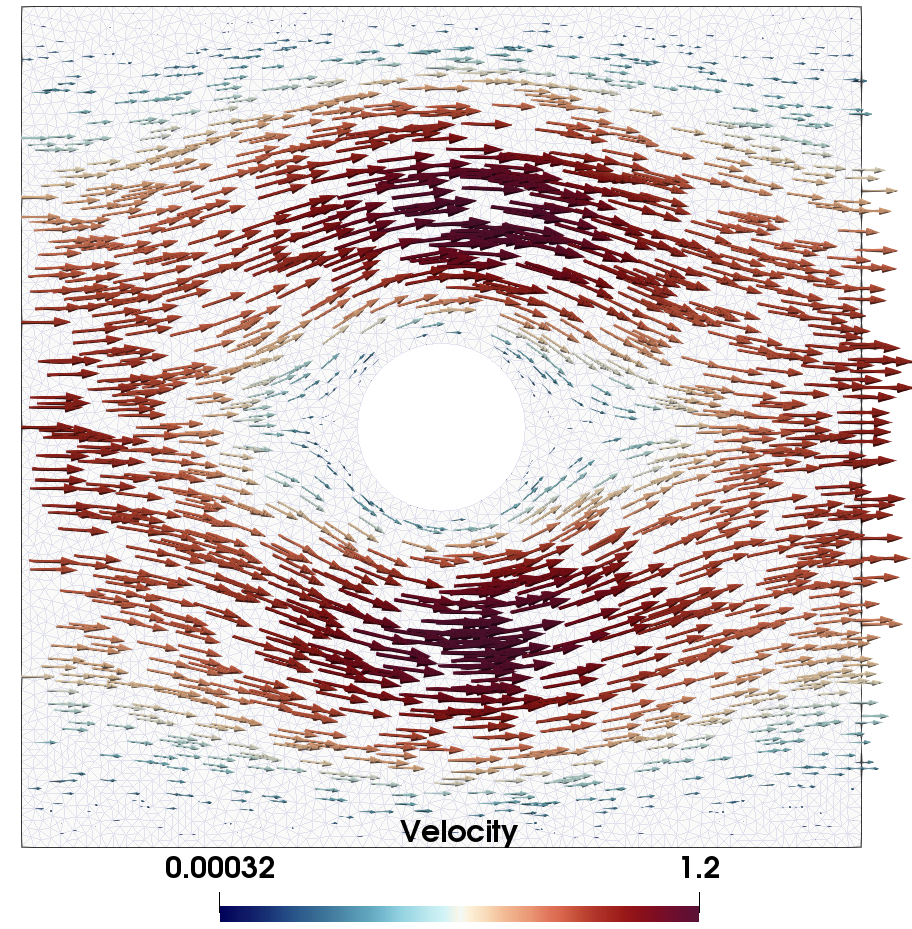}
	}
	\subfloat[Streamlines of $\bu$]{
		\includegraphics[width=0.33\textwidth]{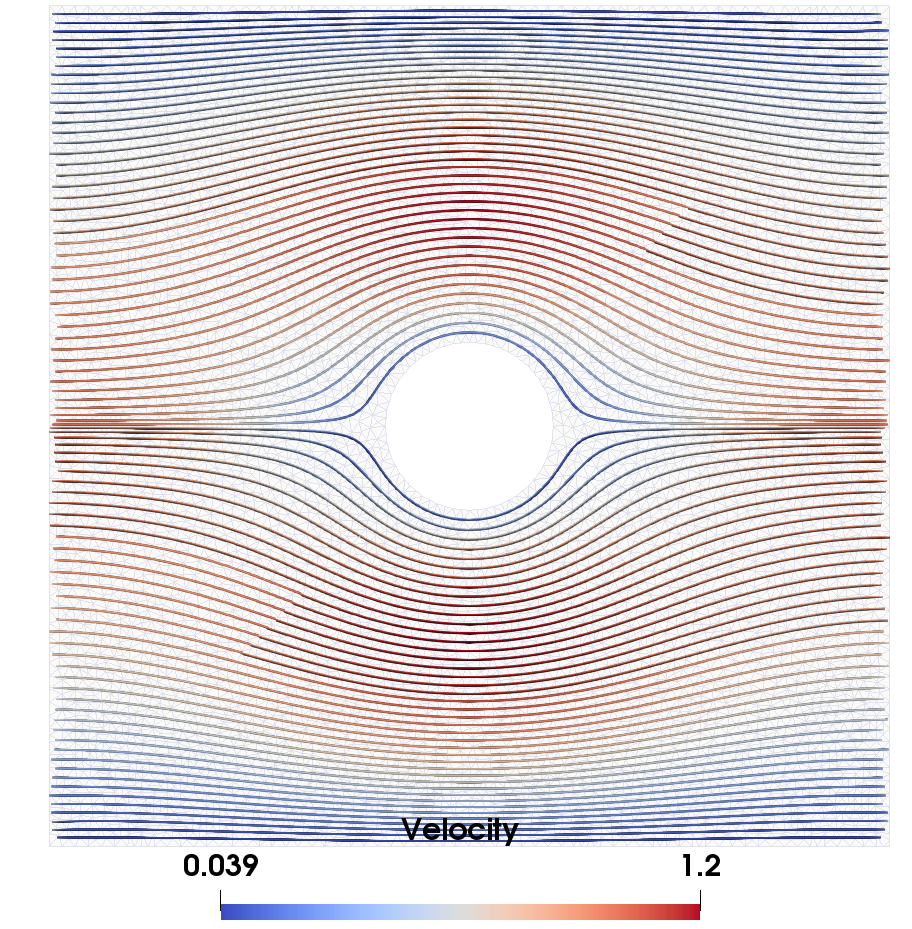}
	}
	\subfloat[Pressure p]{
		\includegraphics[width=0.33\textwidth]{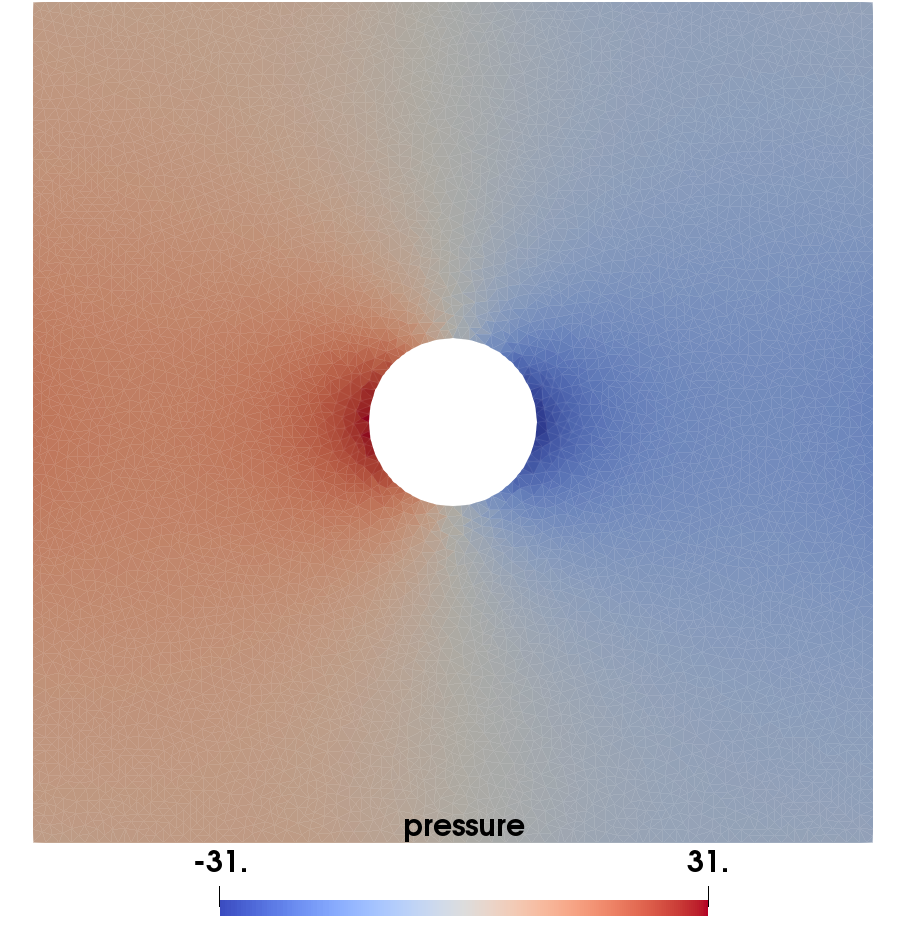}
	}
	\caption{Example 7.3: The velocity and pressure profiles for $\mu=1$.}
	\label{fig:ex4_1}
\end{figure}

\newpage 
Table~\ref{tab:prec-ex3} shows iterations counts for the block preconditioners. Here, we vary the physical parameter $\mu$ only while the tolerance is set to $10^{-6}$. Numerical results verify that the proposed block preconditioners are also effective for this more complicated test problem.

\begin{table}[!h]
	\begin{center}
		\begin{tabular}{|c || c| c| c ||c| c| c |}
			\hline
			$\mu$ & $\mathcal{B}_D$ & $\mathcal{B}_L$ & $\mathcal{B}_U$  & $\mathcal{M}_D$ & $\mathcal{M}_L$ & $\mathcal{M}_U$ 
			\\ 
			\hline 
			$1$ 	& 31 & 16 & 13 & 32 & 21 & 25  \\
			% Entering 2nd row
			$0.1$ 	& 34 & 18 & 16 & 36 & 23 & 24 \\
			% Entering 3rd row
			$0.01$ 	& 41 & 21 & 21 & 42 & 28 & 27  \\
			% Entering 4th row
			$0.001$ & 47 & 25 & 25 & 45 & 30 & 30  \\
			% [1ex] adds vertical space
			\hline
		\end{tabular}
	\end{center}
    \caption{Example~\ref{sec:ex3}. Iteration counts for the block preconditioners.}
	\label{tab:prec-ex3}
\end{table}%

%\clearpage
%\newpage 
\subsection{Example 4: Channel flow with Discontinuous Viscosity } \label{sec:ex4}
In this last example, we utilize the same domain, mesh, and boundary conditions, \eqref{eqn:ex3}, as in Example 3, but we set a discontinuous viscosity as follows:
\begin{equation*}
	\mu  = 
	\begin{cases}
			1 \  \ \ \ \  \text{ if } y > 0.5, \\
		0.01 \ \text{ if } y \leq 0.5.
	\end{cases}
	\label{eqn:viscosity}
\end{equation*}
Figure \ref{fig:ex4_discont} illustrates the vector field of the velocity, the streamlines of the velocity, and the pressure values.  Here, we see that EG provides numerical results without any spurious oscillations  near the discontinuity. Additionally, Table~\ref{tab:prec-ex4} summarizes the iteration counts for the block preconditioners with the discontinuous $\mu$. The proposed block preconditioners still perform effectively in this case with a discontinuous viscosity. 

\begin{figure}[!h]
	\centering
	\subfloat[Velocity $\bu$]{
		\includegraphics[width=0.33\textwidth]{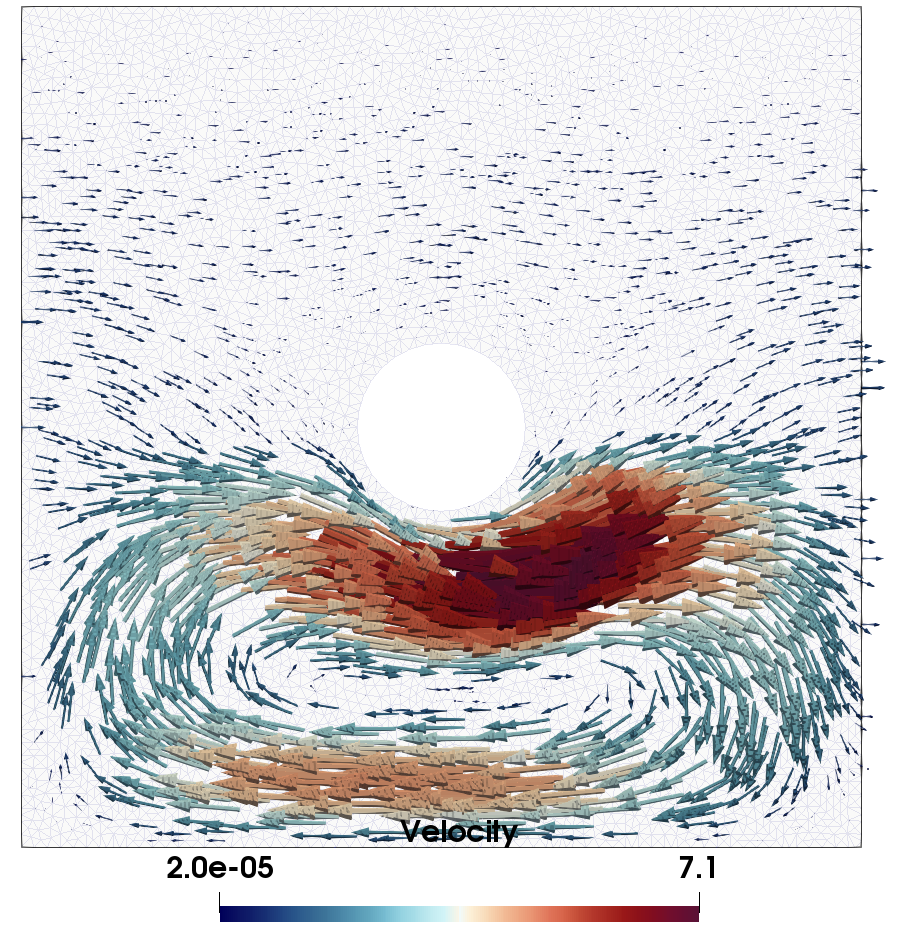}
	}
	\subfloat[Streamlines of $\bu$]{
		\includegraphics[width=0.33\textwidth]{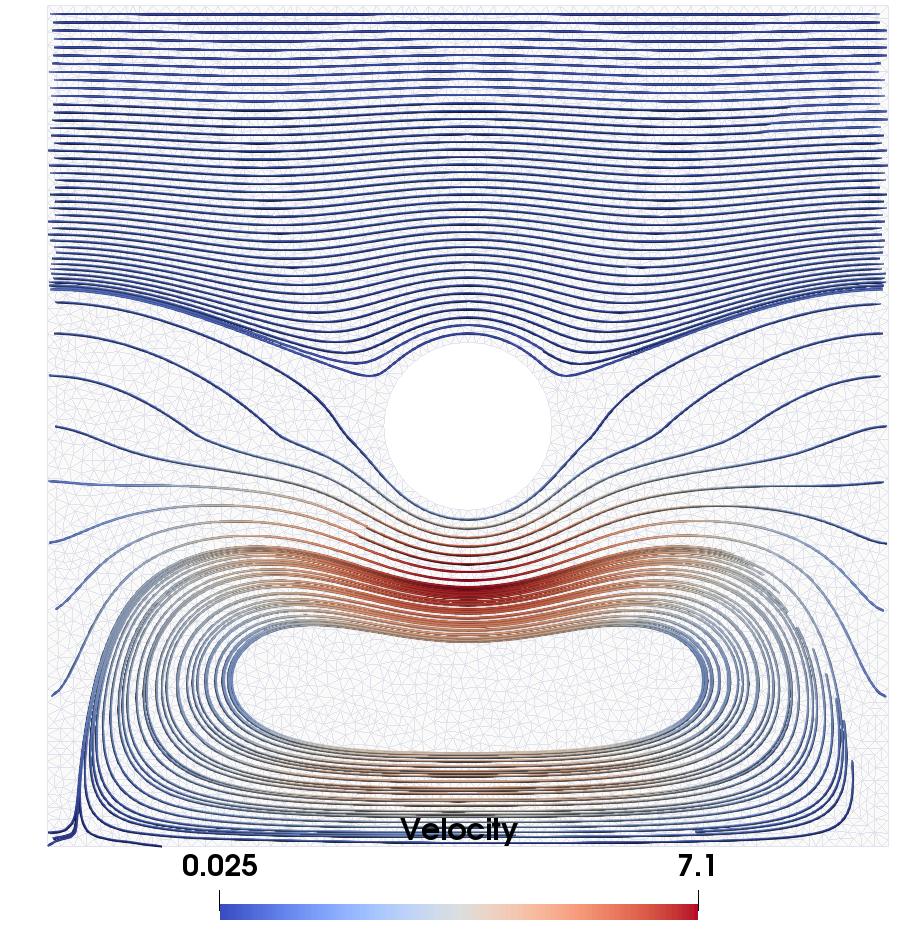}
	}
	\subfloat[Pressure p]{
		\includegraphics[width=0.33\textwidth]{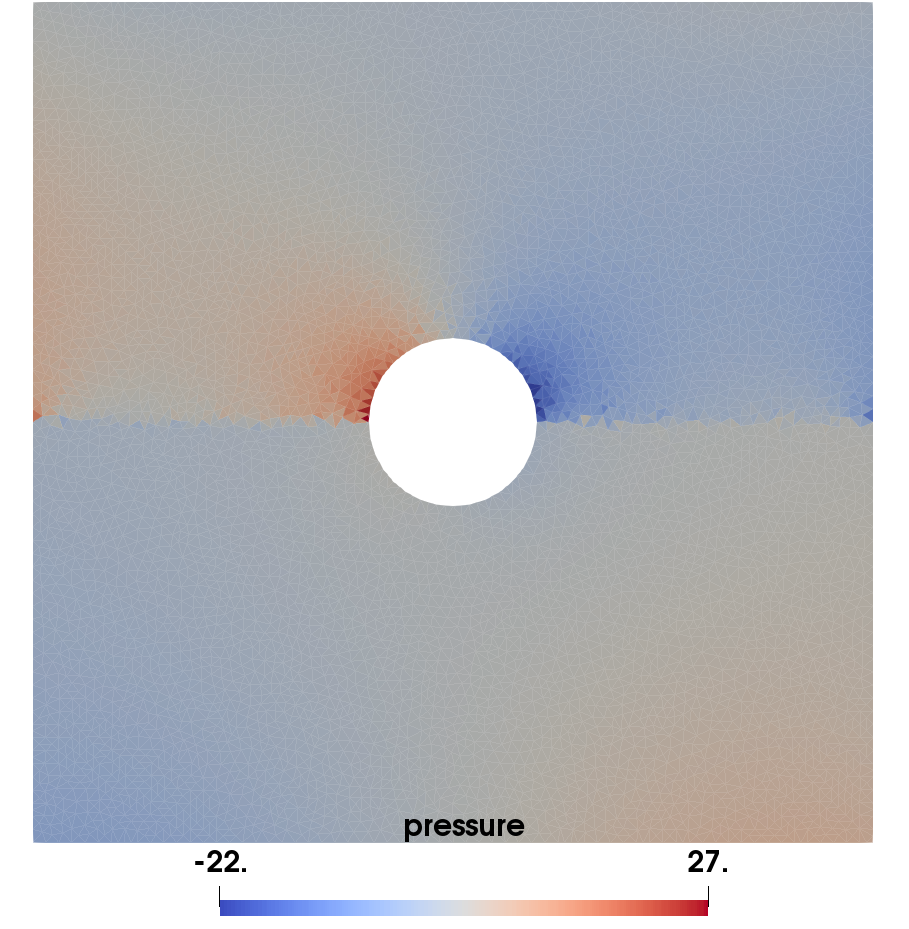}
	}
	\caption{Example 7.4: The velocity and pressure values with a discontinuous viscosity.}
	\label{fig:ex4_discont}
\end{figure}

\begin{table}[h!]
	\begin{center}
		\begin{tabular}{|c | c | c || c | c|  c  |c |}
			\hline 
		    $\mathcal{B}_D$ & $\mathcal{B}_L$ & $\mathcal{B}_U$  & $\mathcal{M}_D$ & $\mathcal{M}_L$ & $\mathcal{M}_U$ 
			\\ 
			\hline 
			 137 & 73 & 66 & 136 & 93 & 88 \\
			\hline 
		\end{tabular}
	\end{center}
    \caption{Example~\ref{sec:ex4}. Iteration counts for the block preconditioners.}
	\label{tab:prec-ex4}
\end{table}%

\section{Conclusions}\label{sec:conclusions}
In this paper, we have presented a new enriched Galerkin scheme for the Stokes equations based on the piecewise linear space for the velocity and piecewise constants for the pressure.  As this ``enrichment" involves adding only one degree of freedom per element, there are far fewer total degrees of freedom in this EG method than there are for a standard conforming continuous Galerkin approach, such as the $\mathbb{P}_2$-$\mathbb{P}_1$ or $\mathbb{P}_2$-$\mathbb{P}_0$ schemes and other nonconforming discontinuous Galerkin discretizations.  Yet, we are able to prove the well-posedness of the new EG scheme and show, via an error analysis, an optimal order of convergence for the velocity in the energy norm and the pressure in the $L^2$-norm.  Moreover, due to the well-posedness of the discrete system, we have shown that robust block preconditioners can be developed, yielding scalable results independent of the discretization and physical parameters.  The numerical results confirm this robustness for a variety of test problems with varying types of boundary conditions, and with more complex fluid dynamic features such as varying viscosities that are both continuous and discontinuous. Future work involves expanding the results to three-dimensional geometries and validating block preconditioners for those test problems as well.  Additionally, this approach can also be applied to other complex fluid problems such as Navier-Stokes and magnetohydrodynamics.

\bibliography{Stokes,stokes_intro,lit}

\end{document}